\theoremstyle{remark}
\newtheorem{dfn}{Definition}[section]
\newtheorem{exm}[dfn]{Example}
\newtheorem{rmk}[dfn]{Remark}
\theoremstyle{plain}
\newtheorem{thm}[dfn]{Theorem}
\newtheorem{prop}[dfn]{Proposition}
\newtheorem{lem}[dfn]{Lemma}
\newtheorem{cor}[dfn]{Corollary}
\newcommand*{\Cdot}{\raisebox{-1ex}{\scalebox{3}{$\cdot$}}}
\renewcommand{\leq}{\leqslant}
\renewcommand{\geq}{\geqslant}
\renewcommand{\setminus}{\smallsetminus}
\newcommand{\Z}{\mathbb{Z}}
\newcommand{\Q}{\mathbb{Q}}
\newcommand{\R}{\mathbb{R}}
\newcommand{\C}{\mathbb{C}}
\renewcommand{\P}{\mathbb{P}}
\title{Localization of certain odd-dimensional manifolds with torus actions}
\subjclass[2010]{Primary 57S25; Secondary 55N91, 05C90}
\keywords{Torus action, Equivariant cohomology, Equivariant formality, GKM theory}
\author[He]{\bfseries Chen He}
\address{
	Department of Mathematics and Physics \\ 
	North China Electric Power University \\ 
	Beijing\\
	China}
\email{che@ncepu.edu.cn}
\begin{document}


\vspace{18mm} \setcounter{page}{1} \thispagestyle{empty}

\begin{abstract}
Let a torus $T$ act smoothly on a compact smooth manifold $M$. If the rational equivariant cohomology $H^*_T(M)$ is a free $H^*_T(pt)$-module, then according to the Chang-Skjelbred Lemma, it can be determined by the $1$-skeleton consisting of the $T$-fixed points and $1$-dimensional $T$-orbits of $M$. When $M$ is an even-dimensional, orientable manifold with 2-dimensional 1-skeleton, Goresky, Kottwitz and MacPherson gave a graphic description of the equivariant cohomology. In this paper, first we revisit the even-dimensional GKM theory and introduce a notion of GKM covering, then we consider the case when $M$ is an odd-dimensional, possibly non-orientable manifold with $3$-dimensional $1$-skeleton, and give a graphic description of its equivariant cohomology.
\end{abstract}

\maketitle

\tableofcontents

\section{Introduction}
\vskip 15pt
Let a torus $T$ act smoothly on a compact smooth manifold $M$. We use $\Q$ coefficients for cohomology throughout the paper. The \textbf{$T$-equivariant cohomology} of $M$ is defined via the Borel construction $H^*_T (M) \triangleq H^*((M\times ET) / T)$, where $ET$ is the universal bundle of $T$. Fixing an identification $T\cong (S^1)^k$, we have $ET\cong (S^\infty)^k$ and $H^*_T (pt)\cong H^*(ET / T)\cong H^*((\C P^\infty)^k)=\Q[\alpha_1,\ldots,\alpha_k]$, where $\alpha_1,\ldots,\alpha_k$ of cohomological degree $2$ are the first Chern classes of the universal line bundles over $(\C P^\infty)^k$. These $\alpha_i$'s can be identified as a basis of the integral weight lattice $\mathfrak{t}^*_\Z$ of the rational dual Lie algebra $\mathfrak{t}^*_\Q$, hence the polynomial ring $\Q[\alpha_1,\ldots,\alpha_k]$ can be identified as $\mathbb{S}\mathfrak{t}^*_\Q$, the symmetric power of $\mathfrak{t}^*_\Q$. The trivial map $ M \rightarrow pt$ induces a homomorphism $ H^*_T(pt)\rightarrow H^*_T(M)$ and gives $H^*_T(M)$ an $H^*_T (pt)$-algebra structure. 

For $p\in M$, denote its $T$-orbit by $\mathcal{O}_p$. Set the \textbf{$i$-th skeleton} $M_i=\{p\in M \mid \textup{dim}\,\mathcal{O}_p\leq i\}$, then we have a $T$-equivariant filtration of closed subsets $M_0 \subseteq M_1 \subseteq \cdots \subseteq M_{\textup{dim}\,T}=M$, where the $0$-skeleton $M_0$ is the fixed-point set $M^T$. 

If $H^*_T (M)$ is a free $H^*_T (pt)$-module, Chang and Skjelbred \cite{CS74} proved that the equivariant cohomology $H^*_T(M)$ can be described as a sub-ring of $H^*_T(M^T)$, subject to certain relations determined by the $1$-skeleton $M_1$. Goresky, Kottwitz and MacPherson \cite{GKM98} considered certain torus actions on complex projective manifolds such that the fixed-point set $M^T$ is finite and the $1$-skeleton $M_1$ is a finite union of $S^2$'s. They proved that the cohomology $H^*_T(M)$ can be described in terms of congruence relations on a graph determined by the $1$-skeleton $M_1$. Since then, various GKM-type theorems were proved, for instance, by Brion \cite{Br97} on equivariant Chow groups, by Guillemin\&Zara \cite{GZ01} on abstract GKM graph theory, by Knutson\&Rosu \cite{KR03}, Vezzosi\&Vistoli \cite{VV03} on equivariant K-theory, and by Guillemin\&Holm \cite{GH04} on certain Hamiltonian torus actions on symplectic manifolds with non-isolated fixed points. Recent generalizations of GKM-type theorem were given by Goertsches, Nozawa\&T\"{o}ben \cite{GNT12} on certain Cohen-Macaulay actions on K-contact manifolds, and by Goertsches\&Mare \cite{GM14a} on actions of non-abelian groups.  

In this paper, first we revisit the even-dimensional GKM theory and introduce a notion of GKM covering, then we consider the case when $M$ is an odd-dimensional, possibly non-orientable manifold with $3$-dimensional $1$-skeleton, and give a graphic description of its equivariant cohomology.

\textbf{Acknowledgement} An early draft of this paper was part of the author's PhD thesis at Northeastern University, Boston. The author thanks Shlomo Sternberg for teaching him symplectic geometry. The author thanks Victor Guillemin and Jonathan Weitsman for guidance. The author thanks Catalin Zara for many useful discussions and carefully reading the manuscript, and thanks Volker Puppe, Oliver Goertsches and Liviu Mare for helpful suggestions. The author also thanks the anonymous referees for their excellent comments. The author thanks the Ling-Ma graduate research fund at Northeastern University and the China Postdoctoral Science Foundation (Grant No.\,2018T110083) for the generous support.

\vskip 20pt
\section{Torus actions and equivariant cohomology}
\vskip 15pt
We will recall some definitions and classical theorems regarding torus actions and equivariant cohomology. For general reference, see \cite{B72,H75,K91,AP93}.

\subsection{Torus actions}\label{subsec:T-action}
Throughout the paper, unless otherwise mentioned, a manifold $M$ is assumed to be smooth, compact, connected and boundaryless, but possibly non-orientable. Let a torus $T$ act smoothly, effectively on a manifold $M$. If $M$ is orientable, then we fix an orientation and assume that the $T$-action preserves the orientation. 

\subsubsection{Fixed-point set and isotropy weights}\label{subsub:Fixed-point}
For a point $p$ in a connected $T$-fixed component $C\subseteq M^T$, there is the \textbf{isotropy representation} of $T$ on the tangent space $T_p M$, which splits into weight spaces $T_p M = V_0 \oplus V_{[\lambda_1]} \oplus \cdots \oplus V_{[\lambda_r]}$ that holds for any $p\in C$. Each nonzero weight $[\lambda_i]\in \mathfrak{t}^*_\Z/\{\pm 1\}$ is determined only up to sign, and we have $V_{[\lambda_i]}\cong W_i\otimes_\R \R^2_{[\lambda_i]}$, where $W_i$ is a real vector space and $\R^2_{[\lambda_i]}$ is the irreducible real $T$-representation of weight $[\lambda_i]$.

Since $M$ is assumed to be compact, there is a $T$-invariant Riemannian metric. The exponential map at $p$, restricted to the subspace $V_0 \subset T_p M$, gives a local submanifold structure of a fixed component $C$ at $p$. Comparing the isotropy weight splitting with the tangent-normal splitting $T_p M = T_p C \oplus N_p C$ along $C$, we get $T_p C = V_0$ and $N_p C = V_{[\lambda_1]} \oplus \cdots \oplus V_{[\lambda_r]}$. The dimensions of $M$ and $C$ are of the same parity. If $M^T$ is non-empty, then for even $\mathrm{dim}\, M$, the smallest possible components of $M^T$ could be isolated points; for odd $\mathrm{dim}\, M$, the smallest possible components of $M^T$ could be isolated circles.

\subsubsection{Orientations}\label{subsub:orientation}
Fixing a sign for $[\lambda_i]$ as $\lambda_i \in \mathfrak{t}^*_\Z$ is equivalent to identifying $\R^2_{[\lambda_i]}$ as the irreducible complex $T$-representation $\C_{\lambda_i}$. This gives a complex structure and hence an orientation for $V_{[\lambda_i]}$. From now on, we assume that, at every $T$-fixed component, a sign of $[\lambda_i]$ and hence an orientation of $V_{[\lambda_i]}$ have been chosen and we will write them as $\lambda_i$ and $V_{\lambda_i}$. If $M$ has a $T$-invariant almost complex structure, then the signs of the isotropy weights are canonically determined. 

The orientations on $V_{\lambda_i}$'s give an orientation on $N_p C$. If $M$ is oriented, then the tangent-normal splitting $T_p M = T_p C \oplus N_p C$ induces an orientation on $T_p C$. If we have prechosen an orientation for $C$, then we can compare this prechosen orientation with the induced orientation to get a sign. On the other hand, even if $M$ is non-orientable, it is still possible to have some orientable components $C\subseteq M^T$. 

\subsubsection{Sub-actions and residual actions}
For any subtorus $K$ of $T$, we get two more actions automatically: the \textbf{sub-action} of $K$ on $M$ and the \textbf{residual action} of $T/K$ on $M^K$ using the fact that $T$ is abelian. Moreover, we have $(M^K)^{T/K}=M^T$. 

\subsection{Some basics of equivariant cohomology}

Given an action $T\curvearrowright M$, one can compare $H^*_T(M)$ with $H^*_T(M^T)$:

\begin{thm}[Borel Localization Theorem]\label{BoL}
	The restriction map 
	$
	H^*_T(M)\longrightarrow H^*_T(M^T)
	$
	is an $H^*_T (pt)$-module isomorphism modulo $H^*_T (pt)$-torsions.
\end{thm}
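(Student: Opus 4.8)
The plan is to extract the theorem from the long exact sequence of the pair $(M, M^T)$ in equivariant cohomology,
\[
\cdots \longrightarrow H^k_T(M, M^T) \longrightarrow H^k_T(M) \longrightarrow H^k_T(M^T) \longrightarrow H^{k+1}_T(M, M^T) \longrightarrow \cdots,
\]
and to show that every relative group $H^k_T(M, M^T)$ is a torsion $H^*_T(pt)$-module. Since $H^*_T(pt) = \mathbb{S}\mathfrak{t}^*$ is a polynomial ring and hence an integral domain, tensoring with its field of fractions $\mathcal{F}$ is exact; applying $- \otimes_{\mathbb{S}\mathfrak{t}^*} \mathcal{F}$ to the sequence above then kills the relative terms and turns the restriction map into an isomorphism, which is precisely the assertion that it is an isomorphism modulo $H^*_T(pt)$-torsion. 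So the entire theorem reduces to the torsion statement for the relative groups.

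The heart of the argument is the claim that if $T$ acts on a compact space $X$ with $X^T = \emptyset$, then $H^*_T(X)$ is entirely torsion. First I would settle this for a single orbit: for a proper subtorus $K \subsetneq T$ one has $H^*_T(T/K) \cong H^*_K(pt) = \mathbb{S}\mathfrak{k}^*$, and since $K$ is proper there is a nonzero weight $\alpha \in \mathfrak{t}^*_\Z$ vanishing on $\mathfrak{k}$; such an $\alpha$ restricts to $0$ in $\mathbb{S}\mathfrak{k}^*$ and therefore annihilates this module, so it is torsion over $\mathbb{S}\mathfrak{t}^*$. By the slice theorem each point of $X$ has a $T$-invariant neighborhood that equivariantly deformation retracts onto its orbit $T/K$ with $K$ proper, so $H^*_T$ of any such neighborhood is torsion. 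As $X$ is compact it is covered by finitely many of these, and a Mayer--Vietoris induction over this finite invariant cover--again using exactness of localization at $\mathcal{F}$, under which every piece becomes $0$--shows $H^*_T(X) \otimes_{\mathbb{S}\mathfrak{t}^*} \mathcal{F} = 0$, i.e.\ $H^*_T(X)$ is torsion.

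Finally I would apply this lemma to the complement of the fixed-point set. Choosing a closed $T$-invariant tubular neighborhood $D$ of $M^T$ (a disk bundle in the normal bundle, whose existence rests on the equivariant tubular neighborhood theorem), the complement $W = M \setminus \mathrm{int}\,D$ is a compact manifold with boundary $\partial W = \partial D$, and both $W$ and $\partial D$ avoid $M^T$ and hence have no $T$-fixed points. By the lemma $H^*_T(W)$ and $H^*_T(\partial D)$ are torsion, so from the long exact sequence of the pair $(W, \partial W)$ so is $H^*_T(W, \partial W)$. Since $D$ equivariantly retracts onto $M^T$, excision yields $H^*_T(M, M^T) \cong H^*_T(W, \partial W)$, which is therefore torsion, completing the reduction. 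The step I expect to be the main obstacle is the Mayer--Vietoris induction in the lemma: one must arrange a finite invariant cover compatible with the slice decomposition so that all the intermediate groups remain fixed-point-free, and then verify that localization at $\mathcal{F}$ commutes with the finitely many connecting homomorphisms; the equivariant tubular neighborhood and the excision isomorphism must also be justified within the Borel-construction model of $H^*_T$.
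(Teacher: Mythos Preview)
The paper does not supply its own proof of this theorem: it is stated as a classical result and immediately used, so there is nothing to compare your argument against line by line. Your outline is the standard proof of Borel localization (long exact sequence of the pair, reduction to showing that $H^*_T$ of a fixed-point-free compact $T$-space is torsion, slice theorem plus Mayer--Vietoris, and excision via an equivariant tubular neighborhood), and it is correct.

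One small refinement: you phrase the orbit computation for ``a proper subtorus $K\subsetneq T$'', but stabilizers of points in $M\setminus M^T$ are only proper closed subgroups, not necessarily connected. Over $\mathbb{Q}$ this is harmless---if $K_0$ denotes the identity component then $H^*_K(pt;\mathbb{Q})\cong H^*_{K_0}(pt;\mathbb{Q})$, and $K_0$ is a proper subtorus whenever $K\neq T$, so your weight $\alpha$ still exists and annihilates the module---but it is worth saying explicitly. The concerns you flag at the end (finite invariant cover, commutation of localization with connecting maps, equivariant excision) are all routine in this setting; compactness of $M$ gives the finite cover, localization at $\mathcal{F}$ is exact because $\mathbb{S}\mathfrak{t}^*$ is a domain, and equivariant tubular neighborhoods and excision hold for smooth $T$-manifolds via the Borel construction.
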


\begin{dfn}
	An action $T\curvearrowright M$ is \textbf{equivariantly formal} if $H^*_T (M)$ is a free $H^*_T (pt)$-module. 
\end{dfn}

For an equivariantly formal $T$-action, the restriction map embeds $H^*_T(M)$ into $H^*_T(M^T)$. 

\begin{cor}[Existence of fixed points]\label{Nonempty}
If an action $T\curvearrowright M$ is equivariantly formal, then the fixed-point set $M^T$ is non-empty.
\end{cor}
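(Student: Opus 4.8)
The plan is to argue by contradiction, extracting the conclusion directly from the Borel Localization Theorem \ref{BoL} together with the freeness hypothesis. Suppose that $M^T = \emptyset$. Then $H^*_T(M^T) = H^*_T(\emptyset) = 0$, so the restriction map $H^*_T(M) \to H^*_T(M^T)$ of Theorem \ref{BoL} is necessarily the zero map. By that theorem this map is an isomorphism modulo $H^*_T(pt)$-torsion; equivalently, both its kernel and its cokernel are $H^*_T(pt)$-torsion modules. Since the target vanishes, the kernel is all of $H^*_T(M)$, and hence $H^*_T(M)$ is a torsion $H^*_T(pt)$-module.

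On the other hand, equivariant formality says that $H^*_T(M)$ is a free module over $H^*_T(pt) = \mathbb{S}\mathfrak{t}^*$. Because $\mathbb{S}\mathfrak{t}^*$ is a polynomial ring, it is an integral domain, so every free module over it is torsion-free. A module that is simultaneously torsion and torsion-free must be zero, whence $H^*_T(M) = 0$.

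This yields a contradiction. Since $M$ is a nonempty compact manifold, the Borel construction $(M \times ET)/T$ is nonempty, so its degree-zero cohomology $H^0_T(M)$ is nonzero; concretely, the unit $1 \in H^0_T(M)$ cannot vanish, as the structure map $H^*_T(pt) \to H^*_T(M)$ is a unital ring homomorphism into a nonzero free module. Therefore the assumption $M^T = \emptyset$ is untenable, and $M^T$ is nonempty.

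The only delicate point, and the step I would check most carefully, is the correct reading of the phrase ``isomorphism modulo torsion'' when the codomain is zero: one must observe that this forces the entire source $H^*_T(M)$ to be torsion, after which the freeness hypothesis and the integral-domain structure of $H^*_T(pt)$ finish the argument. No geometric input beyond the compactness (and nonemptiness) of $M$ is needed.
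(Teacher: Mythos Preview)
Your proof is correct and follows essentially the same approach as the paper's: both deduce the result directly from the Borel Localization Theorem together with the freeness of $H^*_T(M)$ over $H^*_T(pt)$. The paper compresses the argument into a single sentence about ranks---since the restriction is an isomorphism modulo torsion and $H^*_T(M)$ is free of nonzero rank, $H^*_T(M^T)$ must have the same nonzero rank---whereas you unpack the same idea as a contradiction via the torsion/torsion-free dichotomy; the underlying logic is identical.
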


Moreover, the embedded image of $H^*_T(M)$ can be described in the following way:

\begin{thm}[Chang-Skjelbred Lemma, \cite{CS74}]\label{Chang}
	If an action $T\curvearrowright M$ is equivariantly formal, then the equivariant cohomology $H^*_T (M)$ only depends on the fixed-point set $M^T$ and the $1$-skeleton $M_1$:
	\[
	H^*_T(M) \cong  H^*_T(M_1) \cong \bigcap_{\mathrm{codim}\,K=1} \Big( \mathrm{Im}\big( H^*_T(M^K) \rightarrow  H^*_T(M^T)\big)\Big)
	\]
	where the intersection is taken over all (finitely many) codim-$1$ subtori $K$ that are also the identity components of some stabilizers of the action $T\curvearrowright M$.
\end{thm}

\begin{rmk}
	The above version of Chang-Skjelbred Lemma was due to Goresky, Kottwitz and MacPherson \cite{GKM98}, also see Tolman and Weitsman \cite{TW99}, Goldin and Holm \cite{GoHo01}. 
\end{rmk}

\begin{rmk}
	A more general result, the Atiyah-Bredon long exact sequence, appeared earlier in Atiyah's 1971 lecture notes \cite{A74} for equivariant K-theory and later in Bredon's work \cite{B74} for equivariant cohomology. Franz and Puppe \cite{FP11} generalized the Chang-Skjelbred Lemma to some other coefficient rings. 
\end{rmk}

Equivariant formality is equivalent to the degeneracy at the $E_2$ page of the Leray-Serre spectral sequence of the fibration $M \hookrightarrow (M \times ET) /T \rightarrow BT$. There is a useful criterion for equivariant formality:

\begin{thm}[Total Betti numbers and equivariant formality, see {\cite[p.\,210, Thm\,3.10.4]{AP93}}]\label{CohomIneq}
If $T$ acts on $M$, then $\sum_i \mathrm{dim}\,H^i(M^T) \leq \sum_i \mathrm{dim}\,H^i(M)$, where the equality holds if and only if the action is equivariantly formal.
\end{thm}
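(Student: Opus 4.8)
The plan is to run the Leray--Serre spectral sequence of the Borel fibration $M \hookrightarrow (M\times ET)/T \to BT$ and track ranks over the polynomial ring $S = H^*_T(pt)$. Since $T$ is connected it acts trivially on $H^*(M;\Q)$, so the local system is trivial and $E_2^{p,q} = H^p(BT)\otimes H^q(M)$; in particular $E_2 = S \otimes H^*(M)$ is a \emph{free} $S$-module whose total rank equals $\sum \dim H^*(M)$. The spectral sequence converges to $H^*_T(M)$, and because $M$ is compact each bidegree stabilizes after finitely many pages.

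Next I would show that ranks can only drop. Each differential $d_r$ is $S$-linear, and since localization at the fraction field $Q = \mathrm{Frac}(S)$ is exact, one has $H(E_r\otimes Q,\, d_r\otimes Q) = E_{r+1}\otimes Q$, whence
\[
\mathrm{rank}_S E_{r+1} = \mathrm{rank}_S E_r - 2\,\mathrm{rank}_S(\mathrm{im}\,d_r) \le \mathrm{rank}_S E_r .
\]
Because rank is additive over a filtration (again by exactness of $-\otimes Q$), $\mathrm{rank}_S E_\infty = \mathrm{rank}_S H^*_T(M)$, and iterating the inequality gives $\mathrm{rank}_S H^*_T(M) \le \mathrm{rank}_S E_2 = \sum\dim H^*(M)$. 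To turn the left-hand side into $\sum \dim H^*(M^T)$ I invoke the Borel Localization Theorem \ref{BoL}: the restriction $H^*_T(M)\to H^*_T(M^T)$ is an isomorphism modulo torsion, so the two sides have equal $S$-rank; and since $T$ acts trivially on $M^T$ the fibration over $BT$ is trivial, giving $H^*_T(M^T)\cong H^*(M^T)\otimes S$, a free module of rank $\sum\dim H^*(M^T)$. Chaining these gives the asserted inequality.

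For the equality statement, the reverse implication is immediate: if the action is equivariantly formal then the spectral sequence degenerates at $E_2$, so $E_\infty = E_2$ and $\mathrm{rank}_S H^*_T(M) = \mathrm{rank}_S E_2 = \sum\dim H^*(M)$, which forces equality. The hard part is the forward implication --- upgrading numerical equality of ranks to the vanishing of every differential. Equality means $\mathrm{rank}_S(\mathrm{im}\,d_r)=0$ for all $r$, i.e.\ every $\mathrm{im}\,d_r$ is torsion. Here I would argue by induction on $r$: the base module $E_2$ is free, hence torsion-free, so a torsion submodule of it must vanish, forcing $d_2 = 0$ and $E_3 = E_2$; if all differentials up to $d_{r-1}$ vanish then $E_r = E_2$ is still free, and the same torsion-free argument forces $d_r = 0$. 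Thus all $d_r$ vanish, the sequence degenerates at $E_2$, and $\mathrm{gr}\,H^*_T(M) \cong E_2$ is free over $S$; a standard graded lifting argument then shows $H^*_T(M)$ itself is free, i.e.\ the action is equivariantly formal. I expect this inductive torsion-free step to be the main subtlety, since without the freeness of $E_2$ one could a priori have $d_r\neq 0$ with torsion image and no rank drop.
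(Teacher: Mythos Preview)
The paper does not supply its own proof of this theorem; it is quoted from \cite{AP93} as a background fact and used as a black box. So there is no in-paper argument to compare against.

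Your proof is the standard one and is correct. One small remark on the ``reverse implication is immediate'' step: you are invoking the equivalence between \emph{$H^*_T(M)$ is $S$-free} and \emph{the Serre spectral sequence degenerates at $E_2$}. The paper itself asserts this equivalence without proof just before the theorem, so in context you are entitled to use it; but note that it is not entirely trivial in the direction you need (free $\Rightarrow$ degeneration). If you wanted to make your write-up self-contained, the cleanest route is the one you already set up for the forward implication: freeness of $H^*_T(M)$ gives $\mathrm{rank}_S H^*_T(M) = \dim_\Q H^*_T(M)\otimes_S \Q$, and since localization at $Q=\mathrm{Frac}(S)$ kills every torsion differential one sees $\mathrm{rank}_S H^*_T(M) = \mathrm{rank}_S E_\infty \le \mathrm{rank}_S E_2$, while a Poincar\'e-series (or Nakayama-type) comparison in each total degree forces equality, hence $d_r=0$ for all $r$ by your torsion-free induction. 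Either way, the argument closes.
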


Equivariant and ordinary cohomology can be calculated using Morse theory, for example see \cite{AuBr95}. A sufficient condition for equivariant formality in the presence of a Morse-Bott function is that:
\begin{prop}
If a $T$-manifold $M$ has a $T$-invariant Morse-Bott function $f$ such that the critical submanifold $Crit(f)$ is the fixed-point set $M^T$, then the action $T\curvearrowright M$ is equivariantly formal and the function $f$ is perfect (i.e. $\sum_i \mathrm{dim}\,H^i(M)= \sum_i \mathrm{dim}\,H^i(Crit(f))$).
\end{prop}
\begin{proof}
Theorem\,\ref{CohomIneq} gives $\sum_i \mathrm{dim}\,H^i(M^T) \leq \sum_i \mathrm{dim}\,H^i(M)$. The cohomology $H^*(M)$ can be computed from the Morse-Bott-Witten cochain complex generated on the critical submanifold $Crit(f)$, hence $\sum_i \mathrm{dim}\,H^i(M)\leq \sum_i \mathrm{dim}\,H^i(Crit(f))$.  By our assumption regarding the critical point set and the fixed-point set, we have the equality $\sum_i \mathrm{dim}\,H^i(M^T)=\sum_i \mathrm{dim}\,H^i(Crit(f))$ which forces the previous two inequalities to be equalities and hence implies the equivariant formality of $T\curvearrowright M$ and the perfection of $f$.
\end{proof}

\begin{exm}\label{exm:FormalityOfSymp}
When $M$ is equipped with a symplectic form, a Hamiltonian $T$-action and a moment map $\mu:M\rightarrow \mathfrak{t}^*$, then $\mu^\xi$ gives a Morse-Bott function for any $\xi \in \mathfrak{t}$. Let $\xi$ be generic such that the one-parameter subgroup generated by $\xi$ is dense in $T$, then we have $Crit(\mu^\xi)=M^T$. Hence the above Proposition implies Kirwan's theorem that $M$ is $T$-equivariantly formal and $\mu^\xi$ is perfect. Bozzoni and Goertsches \cite{BG19} observed that this argument also works for certain class of Hamiltonian torus actions on cosymplectic manifolds.
\end{exm}

Restricting to any subtorus $K$ of $T$ acting on $M$, we get

\begin{prop}[Inheritance of equivariant formality] \label{InheritFormality}
An action $T\curvearrowright M$ is equivariantly formal if and only if for any subtorus $K$ of $T$, both the sub-action $K\curvearrowright M$ and the residual action $T/K\curvearrowright M^K$ are equivariantly formal.
\end{prop}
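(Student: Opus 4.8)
The plan is to reduce everything to the numerical criterion of Theorem~\ref{CohomIneq}, which characterizes equivariant formality of a torus action by the equality $\sum \dim H^*(M^T) = \sum \dim H^*(M)$ of total Betti numbers. With this in hand, the whole argument becomes a squeeze between three such quantities, and the Borel Localization Theorem is not needed directly.

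First I would record the elementary but essential identity $(M^K)^{T/K} = M^T$: a point of $M^K$ is fixed by the residual $T/K$-action exactly when it is fixed by all of $T$. I would also note that $T/K$ is again a torus and that the fixed set $M^K$ of the compact group $K$ is a compact smooth manifold, so that Theorem~\ref{CohomIneq} legitimately applies to the residual action $T/K \curvearrowright M^K$, whose fixed set is $M^T$.

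Next, for a fixed subtorus $K$, I would apply Theorem~\ref{CohomIneq} to each of the three actions in play: the sub-action $K \curvearrowright M$ with fixed set $M^K$, the residual action $T/K \curvearrowright M^K$ with fixed set $M^T$, and the full action $T \curvearrowright M$ with fixed set $M^T$. These give the chain
\[
\sum \dim H^*(M^T) \;\leq\; \sum \dim H^*(M^K) \;\leq\; \sum \dim H^*(M),
\]
in which the first inequality is an equality precisely when $T/K \curvearrowright M^K$ is equivariantly formal, and the second precisely when $K \curvearrowright M$ is equivariantly formal. Since $T \curvearrowright M$ is equivariantly formal exactly when the two outer terms coincide, i.e.\ when both inequalities are equalities, I obtain the per-$K$ biconditional: $T \curvearrowright M$ is equivariantly formal if and only if both $K \curvearrowright M$ and $T/K \curvearrowright M^K$ are. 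The proposition then follows by quantifying over $K$: the forward direction applies this biconditional to every $K$ simultaneously, while the backward direction needs it for only a single $K$.

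I do not expect a serious obstacle; the subtlety to get right is purely bookkeeping---confirming that the fixed-set identity $(M^K)^{T/K}=M^T$ holds and that each manifold and torus involved satisfies the hypotheses of Theorem~\ref{CohomIneq} (in particular, disconnected fixed sets cause no trouble since the criterion is phrased via total Betti numbers). The only conceptual input is the squeeze, and once the chain of inequalities is set up the equivalence is forced term by term.
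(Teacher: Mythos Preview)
Your proposal is correct and follows essentially the same route as the paper: both proofs apply Theorem~\ref{CohomIneq} to the three actions $T\curvearrowright M$, $K\curvearrowright M$, and $T/K\curvearrowright M^K$ to obtain the chain $\sum\dim H^*(M^T)\le\sum\dim H^*(M^K)\le\sum\dim H^*(M)$, and then observe that the outer equality holds iff both intermediate ones do. You supply a bit more justification (the identity $(M^K)^{T/K}=M^T$ and the applicability of the criterion), but the argument is the same squeeze.
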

\begin{proof}
The sub-action $K\curvearrowright M$ gives the inequality $
\sum_i \mathrm{dim}\,H^i(M^K) \leq \sum_i \mathrm{dim}\,H^i(M)$. Since $(M^K)^{T/K}=M^T$, the residual action $T/K\curvearrowright M^K$ gives the inequality 
$
{\sum_i \mathrm{dim}\,H^i(M^T)\leq \sum_i \mathrm{dim}\,H^i(M^K)}.
$
Thus, the equality $\sum_i\mathrm{dim}\, H^i(M^T)=\sum_i \mathrm{dim}\,H^i(M)$ holds if and only if both intermediate equalities $\sum_i \mathrm{dim}\,H^i(M^K) = \sum_i \mathrm{dim}\,H^i(M)$ and $\sum_i \mathrm{dim}\,H^i(M^T)= \sum_i \mathrm{dim}\,H^i(M^K)$ hold, which is just a restatement of the proposition. 
\end{proof}

\begin{cor}[Inheritance of fixed points]\label{InheritFix}
If an action $T\curvearrowright M$ is equivariantly formal, then for any subtorus $K$ of $T$, every connected component of $M^K$ has $T$-fixed points.
\end{cor}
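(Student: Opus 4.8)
The plan is to combine the two results flagged just before the statement: equivariant formality of the whole action descends to a residual action by Proposition~\ref{InheritFormality}, and an equivariantly formal action has non-empty fixed set by Corollary~\ref{Nonempty}. The only genuine work is to localize the notion of equivariant formality to a single connected component of $M^K$.

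First I would fix a subtorus $K$ and apply Proposition~\ref{InheritFormality} to conclude that, since $T\curvearrowright M$ is equivariantly formal, the residual action $T/K\curvearrowright M^K$ is equivariantly formal as well. I would also record the identification $(M^K)^{T/K}=M^T$: a point of $M^K$ is fixed by $T/K$ precisely when it is fixed by all of $T$, so the $T/K$-fixed points lying in a component $C$ of $M^K$ are exactly the $T$-fixed points contained in $C$. This reduces the corollary to showing that each component $C$ contains a $T/K$-fixed point.

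The key step is to pass from equivariant formality of $T/K\curvearrowright M^K$ to equivariant formality of the restricted action $T/K\curvearrowright C$ on each connected component $C$. Here I would invoke the cohomology-inequality criterion of Theorem~\ref{CohomIneq}. Writing $M^K=\bigsqcup_\alpha C_\alpha$ as the disjoint union of its connected components, ordinary cohomology is additive, so
\[
\sum\mathrm{dim}\,H^*(M^K)=\sum_\alpha\sum\mathrm{dim}\,H^*(C_\alpha),\qquad
\sum\mathrm{dim}\,H^*\big((M^K)^{T/K}\big)=\sum_\alpha\sum\mathrm{dim}\,H^*\big(C_\alpha^{T/K}\big).
\]
Applying Theorem~\ref{CohomIneq} to each restricted action $T/K\curvearrowright C_\alpha$ yields the term-wise inequalities $\sum\mathrm{dim}\,H^*(C_\alpha^{T/K})\le\sum\mathrm{dim}\,H^*(C_\alpha)$. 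Since the full action $T/K\curvearrowright M^K$ is equivariantly formal, the summed inequality is an equality, which forces equality in every summand; by the equality clause of Theorem~\ref{CohomIneq} each $T/K\curvearrowright C_\alpha$ is therefore equivariantly formal.

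Finally I would apply Corollary~\ref{Nonempty} to each equivariantly formal action $T/K\curvearrowright C_\alpha$ to deduce that $C_\alpha^{T/K}$ is non-empty, i.e.\ $C_\alpha$ contains a $T$-fixed point. I expect the main obstacle to be exactly the component-wise localization in the previous paragraph: freeness of a graded module need not descend to an arbitrary direct summand, so it is important to route the argument through the numerical criterion of Theorem~\ref{CohomIneq}, where additivity of ordinary cohomology makes the descent to connected components transparent.
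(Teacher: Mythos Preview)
Your proof is correct and follows the same route as the paper: invoke Proposition~\ref{InheritFormality} for the residual $T/K$-action, then Corollary~\ref{Nonempty} on each component, using $C^{T/K}=C^T$. The paper's proof simply asserts that the residual action on each connected component is equivariantly formal, whereas you spell out this descent via the additivity argument with Theorem~\ref{CohomIneq}; this is a welcome clarification of a step the paper leaves implicit, but not a different approach.
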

\begin{proof}
By the inheritance of equivariant formality, the residual action of $T/K$ on any connected component $C$ of $M^K$ is also equivariantly formal. Then by the existence of fixed points, $C^{T}=C^{T/K}$ is non-empty.
\end{proof}

\vskip 20pt
\section{GKM theory in even dimensions}
\vskip 15pt
Goresky, Kottwitz and MacPherson \cite{GKM98} originally considered their theory for certain complex projective manifolds with torus actions. Goertsches and Mare \cite{GM14a} observed that those ideas also work for certain possibly non-orientable, even-dimensional manifolds with torus action.

\subsection{GKM condition in even dimensions}
Goresky, Kottwitz and MacPherson \cite{GKM98} considered the smallest possible fixed-point set and $1$-skeleton.

\begin{dfn}[GKM condition in even dimensions]\label{EvenGKMCond}
An action $T\curvearrowright M^{2n}$ is \textbf{GKM} if it is equivariantly formal and the following is satisfied
\begin{itemize}
\item[(1)] The fixed-point set $M^T$ is a non-empty set of isolated points. 
\item[(2)] The $1$-skeleton $M_1$ is $2$-dimensional. Or equivalently, at each fixed point $p\in M^T$, the non-zero weights $\lambda_1,\ldots,\lambda_n \in \mathfrak{t}^*_\Z$ of the isotropy $T$-representation $T\curvearrowright T_pM$ are pair-wise linearly independent.
\end{itemize}
\end{dfn}

By Condition (1), we get $H^*_T(M^T)=\bigoplus_{p\in M^T}\mathbb{S}\mathfrak{t}^*_\Q$.

By Condition (2), for any isotropy weight $\lambda$ at a fixed point $p$, if we set $T_{\lambda}$ as the codimension-1 subtorus of $T$ with the Lie subalgebra $\mathrm{Ker}\,\lambda \subset \mathfrak{t}$, then the component $C_{\lambda}$ of $M^{T_{\lambda}}$ containing $p$ is $2$-dimensional and has an effective residual action of $T/T_{\lambda}$. Choose a $\xi\in \mathfrak{t}$ such that $\lambda(\xi)> 0$, then $\xi$ generates a non-vanishing tangent vector field on $T/T_{\lambda}$. This gives an orientation on $T/T_{\lambda}$ and identifies it as an $S^1$.

\subsection{Geometry and equivariant cohomology of 2d $S^1$-manifolds}
There is a well known classification of $S^1$-actions on compact surfaces with non-empty fixed-point sets. The following result, that the author learned from Audin's book \cite{Au04} but could be traced back much earlier, can be proved by the differentiable slice theorem and other methods.

\begin{lem}[cf. \cite{Au04} p.\,20]\label{2dFormal_1}
	If $S^1$ acts effectively on a closed surface $M$ with a non-empty fixed-point set, then $M$ is $S^1$-equivariantly diffeomorphic to one of the following two:
	\begin{itemize}
		\item $S^2$ with two fixed points;
		\item $\R P^2$ with one fixed point, and an exceptional orbit $S^1/{\Z_2}$
	\end{itemize}
	where $\R P^2$ as the $\Z_2$-quotient of $S^2$, has an induced $S^1$-action from $S^2$.  
\end{lem}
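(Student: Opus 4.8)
The plan is to reduce the problem to two elementary invariants: the Euler characteristic of $M$, which pins down its topological type, and the orbit space $M/S^1$ together with its singular data, which pins down the action. Throughout I treat $M$ as connected (otherwise argue component by component).

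First I would recall that for any $S^1$-action on a compact manifold the generating vector field $X$ vanishes exactly on $M^{S^1}$, so by the Poincar\'{e}--Hopf theorem $\chi(M) = \sum_{p \in M^{S^1}} \mathrm{ind}_p(X)$; this holds with no orientability assumption, since for a nondegenerate zero the index equals $\mathrm{sign}(\det DX)$, which is unchanged under a local orientation-reversing change of coordinates (conjugating $DX$ by a reflection preserves $\det$). At an isolated fixed point $DX$ is an infinitesimal rotation $\begin{pmatrix} 0 & -k \\ k & 0 \end{pmatrix}$ with $k \neq 0$, so $\det DX = k^2 > 0$ and every index is $+1$. Hence $\chi(M)$ equals the number of fixed points, which is $\geq 1$ by hypothesis. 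Running through the classification of compact surfaces, the only ones with $\chi \geq 1$ are $S^2$ ($\chi = 2$) and $\R P^2$ ($\chi = 1$); all others (tori, Klein bottles, higher genus, higher non-orientable) have $\chi \leq 0$. This already forces $M \cong S^2$ with two fixed points or $M \cong \R P^2$ with one fixed point, reducing the remaining work to identifying the action.

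Next I would analyze the orbit space. By the slice theorem, a point on a $1$-dimensional orbit has a $1$-dimensional normal slice on which its finite isotropy group acts; the only nontrivial finite group acting effectively on a line is $\Z/2\Z$ by reflection, so exceptional orbits can only have isotropy $\Z/2\Z$, and each contributes a boundary point of $M/S^1$ (the slice folds to a half-line). Since $M$ is a connected compact surface, $M/S^1$ is a connected compact $1$-manifold with boundary, hence an interval $[0,1]$ once a fixed point is present; its two endpoints are each either a fixed point or a $\Z/2\Z$-exceptional orbit, and all interior orbits are principal. For $M \cong S^2$ both endpoints are the two fixed points, so the action is free over the open interval and collapses at both poles: this is the standard rotation, with weight forced to $\pm 1$ by effectiveness. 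For $M \cong \R P^2$ one endpoint is the single fixed point, and the other, not being a second fixed point, must be a $\Z/2\Z$-exceptional orbit $S^1/(\Z/2\Z)$.

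Finally I would match the $\R P^2$ case with the quotient construction: the rotation of $S^2$ about its axis commutes with the antipodal involution, hence descends to $\R P^2 = S^2/(\Z/2\Z)$, sending the two antipodal poles to a single fixed point and the equator to the exceptional orbit $S^1/(\Z/2\Z)$, so this quotient action realizes exactly the orbit-space data found above. The main obstacle is the passage from orbit-space data to an equivariant diffeomorphism: I would invoke the equivariant tubular-neighborhood (slice) theorem to build standard $S^1$-invariant neighborhoods of each fixed point and each exceptional orbit, then glue along the principal part $[\epsilon, 1-\epsilon] \times S^1$, checking that the gluing is determined up to equivariant isotopy by the endpoint data. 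This reconstruction shows $M$ is equivariantly diffeomorphic to one of the two listed models, and in particular that the $\R P^2$-action is the one induced from $S^2$.
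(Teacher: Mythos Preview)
The paper does not supply its own proof of this lemma: it is stated with a bare citation to Audin's textbook and then used as input for the equivariant-cohomology computation that follows. So there is no argument in the paper to compare against line by line.

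Your proposal is a correct and self-contained proof. The Poincar\'{e}--Hopf step cleanly forces $\chi(M)\geq 1$ and hence $M\in\{S^2,\R P^2\}$ with the right fixed-point count, and the orbit-space analysis via the slice theorem correctly identifies the endpoint data of the interval $M/S^1$ in each case. The only place where you are sketchy is the final reconstruction paragraph: you assert that the gluing along the principal part is determined up to equivariant isotopy by the endpoint data, which is true here (the equivariant diffeomorphism group of $S^1\times S^1$ acts transitively on the possible gluings since both boundary circles are principal orbits), but a reader might want one more sentence explaining why no twist parameter survives. In any case this is exactly the content of the cited passage in Audin, so your argument matches the intended reference rather than diverging from it.
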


Write $H^*_{S^1}(pt)\cong H^*(BS^1)\cong H^*(\C \P^\infty) \cong \Q[\alpha]$ as a polynomial ring in the degree-$2$ variable $\alpha$. Using the equivariant Mayer-Vietoris sequence of $S^2$ viewed as the union of the north and the south hemispheres, we get
\begin{align*}
H^*_{S^1}(S^2) &\cong \big\{ (f_N,f_S)\in \mathbb{Q}[\alpha]\oplus\mathbb{Q}[\alpha]\mid f_N(0)=f_S(0)\big\}.
\end{align*}
The antipodal covering map $\Z_2\rightarrow S^2 \rightarrow \R P^2$ induces a $\Z_2$-action on $H^*_{S^1}(S^2)$ that swaps the tuple $(f_N,f_S)$ to $(f_S,f_N)$. Then we have
\[
H^*_{S^1}(\R P^2) \cong H^*_{S^1}(S^2)^{\Z_2} \cong  \big\{ (f,f)\in \mathbb{Q}[\alpha]\oplus\mathbb{Q}[\alpha]\big\}\cong \mathbb{Q}[\alpha].
\]
The $S^1$-actions on $S^2$ and $\R P^2$ are both equivariantly formal. 

Back to the component $C_\lambda$ of $M^{T_\lambda}$ with the residual action of the circle $T/T_\lambda$, it can only be a sphere or a projective plane, denoted by $S^2_\lambda$ or $\R P^2_\lambda$, such that the codim-$1$ subtorus $T_\lambda$ acts on it trivially and the residual circle $T/T_\lambda$ acts on it equivariantly formally. We have
\begin{align}\label{eq:S2RP2}\tag{$\ast$}
\begin{split}
H^*_T(S^2_{\lambda}) &\cong H^*_{T/T_\lambda}(S^2_{\lambda})\otimes H^*_{T_\lambda}(pt)\cong\big\{ (f_N,f_S)\in \mathbb{S}\mathfrak{t}^*_\Q\oplus\mathbb{S}\mathfrak{t}^*_\Q\mid f_N \equiv f_S \mod{\lambda}\big\}\\
H^*_T(\R P^2_{\lambda}) &\cong H^*_{T/T_\lambda}(\R P^2_{\lambda})\otimes H^*_{T_\lambda}(pt)\cong\mathbb{S}\mathfrak{t}^*_\Q
\end{split}
\end{align}
where the congruence relation $f_N \equiv f_S \mod{\lambda}$ means that $f_N - f_S$ is divisible by $\lambda$.

\subsection{GKM graphs and the GKM theorem in even dimensions}
In the $1$-skeleton $M_1$, each $S^2$ has two fixed points, and each $\R P^2$ has one fixed point. This observation leads to a graphic representation of the relations between $M^T$ and $M_1$, given by Goresky, Kottwitz and MacPherson in the orientable case and by Goertsches and Mare in the possibly non-orientable case.

\begin{dfn}\label{dfn:evenGKMGraph}
	The \textbf{GKM graph} of a GKM action $T\curvearrowright M^{2n}$ consists of: 
	\begin{description}
		\item[Vertices] There are two types of vertices:
		\begin{description}
			\item[$\bullet$] for each fixed point in $M^T$,
			\item[$\star$] for each $\R P^2 \in M_1$.
		\end{description}
		\item[Edges$\,\&\,$Weights] 
		For each $S^2_{\lambda}$, a solid edge with weight ${\lambda}$ joins the two $\bullet$'s that represent its two fixed points. For each $\R P^2_{\mu}$, a dotted edge with weight ${\mu}$ joins its $\star$ to the $\bullet$ that represents its fixed point.
	\end{description}
\end{dfn}

The following notion was introduced by Guillemin and Zara \cite{GZ01}.

\begin{dfn}
	Let $T\curvearrowright M^{2n}$ be a GKM action with the GKM graph $\mathcal{G}$ consisting of the vertex set $V=M^T$ and the weighted edge set $E$. The \textbf{cohomology ring of the GKM graph $\mathcal{G}$}, denoted by $H^*(\mathcal{G})$, is the set 
	$$\big\{ f: V\rightarrow \mathbb{S}\mathfrak{t}^*_\Q \mid f_p \equiv f_q \mod{\lambda} \quad \mbox{for each solid edge $\overline{pq}$ with weight $\lambda$ 
		in $E$}\big\}$$
	which has a canonical $\mathbb{S}\mathfrak{t}^*_\Q$-algebra structure.
\end{dfn}

Combining the Chang-Skjelbred Lemma and the equivariant cohomology of $S^2$ and $\R P^2$ as in \eqref{eq:S2RP2}, we see:

\begin{thm}[GKM theorem in even dimensions, {\cite[p.\,26 Thm\,1.2.2]{GKM98}, \cite[p.\,7 Thm\,3.6]{GM14a}}]\label{thm:EvenGKM}
	If an effective $T$-action on an even-dimensional, possibly non-orientable manifold $M^{2n}$ is GKM with the graph $\mathcal{G}$, then there is an $\mathbb{S}\mathfrak{t}^*_\Q$-algebra isomorphism:
	\[
	H^*_T(M) \cong H^*(\mathcal{G}).
	\]
\end{thm}

\begin{rmk}
	The $\R P^2$'s in the $1$-skeleton $M_1$ do not contribute to the congruence relations. We can erase all the dotted edges in the GKM graph, and call the remaining graph an \textbf{effective} GKM graph. However, if we want to get a GKM-type theorem for much subtler coefficient rings like $\Z$, the $\R P^2$'s in the $1$-skeleton $M_1$ and their corresponding dotted edges in the GKM graph are as crucial as the $S^2$'s and their corresponding solid edges.
\end{rmk}

\begin{rmk}
	The notion of GKM bundles introduced by Guillemin, Sabatini and Zara  \cite{GSZ12} can be naturally generalized to the possibly non-orientable case.
\end{rmk}

\begin{exm}\label{exm:EvenSphere}
For the sphere $S^{2n}=\{(x,z_1,\ldots,z_n)\in \R \oplus \C^n \mid x^2+||z_1||^2+\cdots+||z_n||^2=1\}$, let $T^n$ act on it by $(e^{i\theta_1},\ldots,e^{i\theta_n})\cdot (x,z_1,\ldots,z_n) = (x,e^{i\theta_1}z_1,\ldots,e^{i\theta_n}z_n)$ with the fixed-point set $(S^{2n})^{T^n}=\{(\pm 1,0,\ldots,0)\}$. Since $\mathrm{dim}\,H^*((S^{2n})^{T^n})=2=\mathrm{dim}\,H^*(S^{2n})$, the $T^n$-action on $S^{2n}$ is equivariantly formal by the Formality Criterion Theorem \ref{CohomIneq}. Let $\alpha_1,\ldots,\alpha_n$ be the standard integral basis of $\mathfrak{t}^*_\Z=\Z^n$. Both fixed points have the isotropy weights $\alpha_1,\ldots,\alpha_n$. This means the action is GKM and the GKM graph consists of two $\bullet$-vertices joined via $n$ edges of weights $\alpha_1,\ldots,\alpha_n$. The equivariant cohomology ring is
\begin{align*}
H_{T^n}^*(S^{2n})
&=\{(f,g)\in \mathbb{S}\mathfrak{t}^*_\Q\oplus \mathbb{S}\mathfrak{t}^*_\Q \mid f \equiv g \mod \prod_{i=1}^{n}\alpha_i \}.
\end{align*}
For every such pair $(f,g)$ satisfying $f \equiv g \mod \prod_{i=1}^{n}\alpha_i$, we can write $f-g=h\cdot \prod_{i=1}^{n}\alpha_i$ for an $h\in \mathbb{S}\mathfrak{t}^*_\Q$, hence $f=(f+g)/{2}+(h/2)\cdot\prod_{i=1}^{n}\alpha_i$ and $g=(f+g)/{2}-(h/2)\cdot\prod_{i=1}^{n}\alpha_i$. Let $e=(\prod_{i=1}^{n}\alpha_i,-\prod_{i=1}^{n}\alpha_i)\in\mathbb{S}\mathfrak{t}^*_\Q\oplus \mathbb{S}\mathfrak{t}^*_\Q$, then $\{(1,1),e\}$ is an $\mathbb{S}\mathfrak{t}^*_\Q$-module basis for $H_{T^n}^*(S^{2n})$, and satisfies the relation $e^2=(\prod_{i=1}^{n}\alpha^2_i)\cdot(1,1)$. Write $\mathbb{S}\mathfrak{t}^*_\Q=\Q[\alpha_1,\ldots,\alpha_n]$, we have a ring isomorphism 
\[
H_{T^n}^*(S^{2n})\cong \frac{\Q[\alpha_1,\ldots,\alpha_n;e]}{\langle e^2 = \prod_{i=1}^{n}\alpha^2_i\rangle}.
\]
If we replace $e$ by another generator $e'=(\prod_{i=1}^{n}\alpha_i,0)$ which satisfies ${e'}^2=(\prod_{i=1}^{n}\alpha^2_i,0)=(\prod_{i=1}^{n}\alpha_i)\cdot e'$, we then have another ring isomorphism
\[
H_{T^n}^*(S^{2n})\cong \frac{\Q[\alpha_1,\ldots,\alpha_n;e']}{\langle {e'}^2 = (\prod_{i=1}^{n}\alpha_i) \cdot e'\rangle}.
\]

\end{exm}

\begin{rmk}
	The 2n-dimensional sphere $S^{2n}$ under the standard $T^n$-action can be viewed as a torus manifold as defined by Hattori and Masuda \cite{HM03}. The general results of integral equivariant cohomology rings of torus manifolds were given by Masuda and Panov \cite{MP06}, and its GKM theory were given by Maeda, Masuda and Panov \cite{MMP07}. 
\end{rmk}

\begin{exm}
	$\R P^{2n}$ is the quotient of $S^{2n}$ by the $\Z_2$-action $e^{\pi i}\cdot (x,z_1,\ldots,z_n)=(-x,-z_1,\ldots,-z_n)$. It inherits a $T^n$-action from $S^{2n}$. The fixed-point set is $(\R P^{2n})^{T^n}=\{(\pm 1,0,\ldots,0)\}/\Z_2$, a single point. Since $\mathrm{dim}\,H^*((\R P^{2n})^{T^n})=1=\mathrm{dim}\,H^*(\R P^{2n})$, the $T^n$ action on $\R P^{2n}$ is equivariantly formal by the Formality Criterion Theorem \ref{CohomIneq} with the isotropy weights $\alpha_1,\ldots,\alpha_n$ at the only fixed point. This means the action is GKM and the GKM graph consists of a $\bullet$ joined via $n$ dotted edges of weights $\alpha_1,\ldots,\alpha_n$ to n $\star$'s. The effective GKM graph is a single $\bullet$ without edges. The equivariant cohomology is then $H_{T^n}^*(\R P^{2n})\cong H_{T^n}^*(pt)= \mathbb{S}\mathfrak{t}^*_\Q$. This example was due to \cite{GM14a} p.7 Example 3.7.
\end{exm}

\subsection{GKM covering}\label{subsec:GKMcover}
	Let $\tilde{M}^{2n} \rightarrow M^{2n}$ be a $T$-equivariant covering with a finite covering group $\Gamma$. See \cite[Sec. I.9]{B72} for a general discussion on equivariant covering. If the $T$-action on $\tilde{M}$ is GKM, then according to the even GKM Theorem \ref{thm:EvenGKM}, the equivariant cohomology $H^*_T(\tilde{M})$ concentrates in even degrees, so does its ordinary cohomology $H^*(\tilde{M})$. Since $H^*(M) \cong H^*(\tilde{M})^\Gamma$, the ordinary cohomology $H^*(M)$ also concentrates in even degrees, which implies the collapse of the Leray-Serre spectral sequence of the fibration $M \hookrightarrow (M \times ET) /T \rightarrow BT$ at the $E_2$ page $H^*(BT)\otimes H^*(M)$. Therefore, the $T$-action on $M$ is equivariantly formal. The isotropy weights at the $T$-fixed points of $M$ are inherited from $\tilde{M}$, hence the $T$-action on $M$ is also GKM. Restricting the covering to the fixed points and $1$-skeleta we get the coverings:
	\begin{align*}
	\Gamma\longrightarrow\tilde{M}^T \longrightarrow M^T,\qquad \qquad \Gamma\longrightarrow\tilde{M}_1 \longrightarrow M_1.
	\end{align*}
	
	\begin{dfn}[Finite coverings/quotients of GKM graphs] \label{dfn:GKMcover}
		Given a $T$-equivariant finite covering $\Gamma \rightarrow \tilde{M}^{2n} \rightarrow M^{2n}$ of GKM manifolds, denote their GKM graphs by $\tilde{\mathcal{G}},\,\mathcal{G}$, then there is a $\Gamma$-action on $\tilde{\mathcal{G}}$ and we can view $\mathcal{G}$ as a quotient graph $\tilde{\mathcal{G}}/\Gamma$ in the following sense: 
		\begin{enumerate}
			\item The $\Gamma$-orbits of $\bullet$ vertices in $\tilde{\mathcal{G}}$ one-to-one correspond to the $\bullet$ vertices in $\mathcal{G}$.
			\item The free $\Gamma$-orbits of solid edges in $\tilde{\mathcal{G}}$ one-to-one correspond to solid edges in $\mathcal{G}$.
			\item The $\Gamma$-orbits of $\star$ vertices and dotted edges in $\tilde{\mathcal{G}}$ form a part of the $\star$ vertices and dotted edges in $\mathcal{G}$.
			\item The non-free $\Gamma$-orbits of solid edges in $\tilde{\mathcal{G}}$ form the remaining $\star$ vertices and dotted edges in $\mathcal{G}$.
		\end{enumerate} 
	\end{dfn}
	
	\begin{rmk}
		The (1),\,(2),\,(3) are natural. To understand (4), suppose a solid edge in $\tilde{\mathcal{G}}$ has a nontrivial $\Gamma$-stabilizer, then the represented $S^2$ will be folded by that stabilizer to form a $\R P^2$ which produces a $\star$ and a dotted edge in $\mathcal{G}$.
	\end{rmk}
	
	\begin{rmk}
		We have the equivalences $H^*_T(M)\cong H^*(\mathcal{G}) \cong H^*_T(\tilde{M})^\Gamma \cong H^*(\tilde{\mathcal{G}})^\Gamma$.
	\end{rmk}

	\begin{rmk}
		The above definition also makes sense for finite coverings/quotients of abstract GKM graphs that do not necessarily come from actual GKM manifolds.
	\end{rmk}
	
	\begin{rmk}
		Another idea of obtaining symmetry on GKM graph has been given by Kaji \cite{Ka15} on GKM $T$-manifolds with extended Lie group actions.
	\end{rmk}
As an application of the notion of GKM covering, we can extend Guillemin, Holm and Zara's GKM descriptions \cite[p.\,28 Thm 2.4]{GHZ06} of a certain class of homogeneous spaces to a slightly larger class. 

Let $G$ be a compact, possibly non-connected Lie group with a maximal torus $T$, and $K$ be a closed subgroup of $G$ containing $T$. Denote the sets of positive roots of $G$ and $K$ by $\bigtriangleup^+_{G}$ and $\bigtriangleup^+_{K}$. We use the definition of the Weyl group $W_G\triangleq N_G(T)/T$, where $N_G(T)$ is the normalizer of $T$ in $G$. Let $G_0$ be the identity component of $G$. Since the inclusion $G_0\subseteq G$ is normal, the quotient $G/G_0$ is a finite group. The exact sequence of groups $G_0\hookrightarrow G \twoheadrightarrow G/G_0$ descends to the exact sequence of finite groups $W_{G_0}\hookrightarrow W_{G} \twoheadrightarrow G/G_0$ using the conjugacy properties of $T$ in $G$. We see that the Weyl group $W_G$ is generated not only by the Weyl reflections $\sigma_\lambda,\, \lambda \in \bigtriangleup^+_{G}$, but also by $G/G_0$. 

\begin{prop}\label{prop:GHZ}
	The natural left action $T \curvearrowright G/K$ is GKM with a GKM graph $\mathcal{G}_{\mbox{\tiny $G/K$}}$ such that
	\begin{enumerate}
		\item The vertex set is $W_G/W_K$.
		\item The labelled $S^2$-edges at any $[w]\in W_G/W_K$ are
		\begin{figure}[H]
			\centering
			\begin{tikzpicture}[scale=1.5]
			\path (0:0cm) node[draw,fill,circle,inner sep=0pt,minimum size=4pt,label=left:{$[w]$}] (v0) {};
			\path (0:1cm) node[draw,fill,circle,inner sep=0pt,minimum size=4pt,label=right:{$[w\sigma_\lambda]$}] (v1) {};
			
			\draw (v0) -- node[auto] {$w\lambda$} (v1);
			\end{tikzpicture}
		\end{figure}
		\noindent for all $\lambda \in \bigtriangleup^+_{G} \setminus \bigtriangleup^+_{K}$ with $\sigma_\lambda \not \in W_K$.
		\item The labelled $\R P^2$-edges at any $[w]\in W_G/W_K$ are
		\begin{figure}[H]
			\centering
			\begin{tikzpicture}[scale=1.5]
			\path (0:0cm) node[draw,fill,circle,inner sep=0pt,minimum size=4pt,label=left:{$[w]$}] (v0) {};
			\path (0:1cm) node[draw, fill, star, star points=5, star point ratio=2.25, inner sep=1pt] (v1) {};
			
			\draw (v0)[densely dashed] -- node[auto] {$w\mu$} (v1);
			\end{tikzpicture}
		\end{figure}
		\noindent for all $\mu \in \bigtriangleup^+_{G} \setminus \bigtriangleup^+_{K}$ with $\sigma_\textit{\tiny $\mu$} \in W_K$.
	\end{enumerate}
\end{prop}
\begin{proof}
(Sketch) Guillemin, Holm and Zara \cite{GHZ06} assumed that $G$ is semisimple, connected, and $K$ is connected, but also suggested dropping that assumption using covering space arguments. Following the proof in \cite{GHZ06}, it can be similarly verified that $(G/K)^T=W_G/W_K$, and that the tangent space at the identity coset $eK \in G/K$ is a $T$-representation 
$
T_{eK} G/K = \bigoplus_{\lambda \in \bigtriangleup^+_{G} \setminus \bigtriangleup^+_{K}} \C_\lambda
$.
Hence we get the vertex set and the labelled edge set of the GKM graph of $G/K$. As for the determination of an edge as either an $S^2$ or $\R \P^2$, it depends on whether the edge formed from a reflection $\sigma_\lambda$ is folded when taking the quotient of $W_G$ by $W_K$, i.e. whether $\sigma_\lambda$ is actually an element of $W_K$.
\end{proof}

\begin{exm}
	Denote the real and oriented Grassmannians of $k$-dimensional subspaces in $n$-dimensional real spaces by $G_k(\R^n)$ and $\tilde{G}_k(\R^n)$. They are of the dimension $k(n-k)$. There is a natural $\Z_2$-cover $\tilde{G}_k(\R^n) \rightarrow G_k(\R^n)$ by forgetting the orientations of the oriented $k$-dimensional subspaces. When these Grassmannians are even-dimensional, they are equipped with canonical GKM torus actions which are compatible with the $\Z_2$-covering map. For instance, let $T^2$ act on $\R^5$ such that each $S^1$-factor of $T^2$ respectively rotates each $\R^2$-summand of $\R^5=\R_1^2\oplus\R_2^2\oplus \R$. This real $T^2$-representation induces a canonical $T^2$-action on the $6$-dimensional Grassmannian $G_2(\R^5)$ such that for $t\in T^2$ and $V\in G_2(\R^5)$, we define $t\cdot V$ as the image $t(V)$ of the isomorphism $t:\R^5\rightarrow \R^5$. The fixed-points are the real $2$-dimensional $T^2$-subrepresentations of the representation $T^2\curvearrowright \R^5$, i.e. $G_2(\R^5)^{T^2}=\{\R_1^2,\R_2^2\}$. We denote these two $\R^2$-summands by $V_1,V_2$. Similarly, we get a canonical $T^2$-action on $\tilde{G}_2(\R^5)$ with the fixed-point set $\tilde{G}_2(\R^5)^{T^2}=\{V_{1_+},V_{1_-},V_{2_+},V_{2_-}\}$, where the underlying space of $V_{i_\pm}$ is $V_i$. To understand the $1$-skeleta, one can go further to consider the $2$-dimensional real $S^1$-subrepresentations of $\R^5$. Let $\alpha_1,\alpha_2$ be the standard integral basis of the dual Lie algebra of $T^2$. The following Figure\,\ref{fig:evenGrass} shows the GKM graphs of $G_2(\R^5)$ (\cite{GM14a} p.8 Example 3.8, \cite{He} p.12 (3A)) and $\tilde{G}_2(\R^5)$ (\cite{He} p.23 (3B)) under the canonical $T^2$-actions. To see the $\Z_2$-covering of GKM graphs: the dotted edge of $V_i$ in the GKM graph of $G_2(\R^5)$ corresponds to the folding of the solid edge that joins the pair $(V_{i_+},V_{i_-})$ in the GKM graph of $\tilde{G}_2(\R^5)$. 
	\begin{figure}[H]
		\centering
		\begin{subfigure}[b]{0.45\textwidth}
			\centering
			\begin{tikzpicture}[descr/.style={fill=white,inner sep=2.5pt},scale=1.5]
			\path (-1,0) node[draw,fill,circle,inner sep=0pt,minimum size=4pt,label=left:{$V_{1_+}$}] (v1+) {}
			(1,0) node[draw,fill,circle,inner sep=0pt,minimum size=4pt,label=right:{$V_{1_-}$}] (v1-) {}
			(0,1) node[draw,fill,circle,inner sep=0pt,minimum size=4pt,label=above:{$V_{2_+}$}] (v2+) {}
			(0,-1) node[draw,fill,circle,inner sep=0pt,minimum size=4pt,label=below:{$V_{2_-}$}] (v2-) {}
			(-0.05,0) node (x) {}
			(0.05,0) node (y) {}
			(0,-0.1) node (z) {}
			(0,0.1) node (w) {};
			
			\path[-] (v1+) edge node[left] {$\alpha_2-\alpha_1$} (v2+)
			(v1+) edge node[left] {$\alpha_2+\alpha_1$} (v2-)
			(v1-) edge node[right] {$\alpha_2-\alpha_1$} (v2-)
			(v1-) edge node[right] {$\alpha_2+\alpha_1$} (v2+)
			(v1+) edge (x)
			(v1-) edge node[descr] {$\alpha_1$} (y)
			(v2+) edge node[descr] {$\alpha_2$} (z)
			(w) edge (v2-);
			\end{tikzpicture}
			\caption{GKM graph of $\tilde{G}_{2}(\R^{5})$}
		\end{subfigure}
		\begin{subfigure}[b]{0.45\textwidth}
			\centering
			\begin{tikzpicture}[scale=1.5]
			\path (-1,0) node[draw,fill,circle,inner sep=0pt,minimum size=4pt,label={$V_1$} ] (v1){}
			(1,0) node[draw,fill,circle,inner sep=0pt,minimum size=4pt,label={$V_2$} ] (v2){}
			(-1.75,0) node[draw, fill, star, star points=5, star point ratio=2.25, inner sep=1pt] (v3){}
			(1.75,0) node[draw, fill, star, star points=5, star point ratio=2.25, inner sep=1pt] (v4){}
			(0,-1.3) node (v5){};
			
			\path[-] (v1) edge[bend right=30] node[below] {$\alpha_2-\alpha_1$} (v2)
			(v1) edge[bend left=30] node[above] {$\alpha_2+\alpha_1$} (v2);
			
			\draw[densely dotted] (v1) -- node[below] {$\alpha_1$} (v3)
			(v2) -- node[below] {$\alpha_2$} (v4);	
			\end{tikzpicture}
			\caption{GKM graph of $G_2(\R^5)$}
		\end{subfigure}
	\caption{A 2-cover between GKM graphs of the $T^2$-actions on the $6$d Grassmannians.}\label{fig:evenGrass}
	\end{figure}
\end{exm}
\noindent The details of the $1$-skeleta and the localization of the equivariant cohomology rings of the even-dimensional real and oriented Grassmannians can be found in \cite{He}. 

\vskip 20pt
\section{A GKM-type theorem in odd dimensions}
\vskip 15pt
With the even-dimensional GKM theory well established, it is natural to ask whether there is a parallel odd-dimensional analogue. Goertsches, Nozawa and T\"{o}ben \cite{GNT12} developed a GKM-type theory for a certain class of Cohen-Macaulay torus actions, including an application to certain K-contact manifolds. In this section, we introduce a GKM-type localization result for certain equivariantly formal torus actions on odd-dimensional, possibly non-orientable manifolds.

\subsection{GKM condition in odd dimensions}
As we have seen in the even-dimensional case, we need the 1-skeleton to be nonempty and of the smallest possible dimension. 

\begin{dfn}\label{OddGKMCond}
An action $T\curvearrowright M^{2n+1}$ is \textbf{odd GKM} (or has \textbf{minimal $1$-skeleton in odd dimension}) if it is equivariantly formal and the following is satisfied
\begin{itemize}
\item[(1)] The fixed-point set $M^T$ is a nonempty union of isolated circles. 
\item[(2)] The $1$-skeleton $M_1$ is 3-dimensional. Or equivalently, at any point $p$ on a fixed circle $\gamma\subset M^T$, the non-zero integral weights $\lambda_1,\ldots,\lambda_n \in \mathfrak{t}^*_\Z$ of the isotropy $T$-representation $T\curvearrowright T_{p}M\cong \R \oplus V_{\lambda_1}\oplus \cdots \oplus V_{\lambda_n}$ are pair-wise linearly independent. Using a continuity argument, one can show that the integral weights $\lambda_1,\ldots,\lambda_n$ depend on $\gamma$, not on the specific choice of $p\in \gamma$.
\end{itemize}
\end{dfn}

\begin{rmk}
 One of the anonymous referees suggested the following useful observation. Note $TM|_\gamma\cong T\gamma \oplus N\gamma \cong (\gamma \times \R)\oplus N\gamma$ where $N\gamma$ is the normal bundle of $\gamma$. Because a $\R^{2n}$-vector bundle over $S^1$ is determined by an element in $\pi_1(BO(2n))\cong \pi_0(O(2n)) \cong \Z_2$, there are only two types of $N\gamma$ up to bundle isomorphism, i.e. $N\gamma\cong S^1\times \R^{2n}$ or $(S^1\times_{\Z_2}\R)\times \R^{2n-1} $. If there is a global torus action on this vector bundle whose fixed-point set is the base space $S^1$, then $N\gamma$ must be the trivial bundle $\gamma \times \R^{2n}$. Since $\R^{2n}\cong V_{\lambda_1} \oplus \cdots \oplus V_{\lambda_n}$, then $N\gamma$ decomposes into $\gamma \times (V_{\lambda_1} \oplus \cdots \oplus V_{\lambda_n})$. Set $T_{\lambda_i}$ as the codimension-1 subtorus of $T$ with the Lie subalgebra $\mathrm{Ker}\,\lambda_i \subset \mathfrak{t}$, we have $TM^{T_{\lambda_i}}|_\gamma=\gamma \times \big(\R\oplus(\oplus_j V_{\lambda_j})\big)$ where the $\oplus_j$ is taken over all $\lambda_j$'s that are linear multiples of $\lambda_i$. This observation helps to see the equivalence between the condition that $M_1$ is 3d and the condition of pairwise independence of $\lambda_i$'s.
\end{rmk}

By Condition (1), the fixed-point set $M^{T}$ consists of circles. We fix a unit orientation form $\theta_{\gamma}$ for each circle $\gamma$, and write
\[
H^*_T(M^T)=\bigoplus_{\gamma\subset M^T}\big(H^*_T(pt)\otimes H^*(S^1_\gamma)\big)=\bigoplus_{\gamma\subset M^T}\big(\mathbb{S}\mathfrak{t}^*_\Q\oplus \mathbb{S}\mathfrak{t}^*_\Q\theta_\gamma\big).
\] 

By Condition (2), for any isotropy weight $\lambda$ along a fixed circle $\gamma\subset M^T$, the component $C_{\lambda}$ of $M^{T_{\lambda}}$ containing $\gamma$ is $3$-dimensional and has an effective residual action of the circle $T/T_{\lambda}$. 

\subsection{Geometry and equivariant cohomology of 3d $S^1$-manifolds}
3-dimensional $S^1$-manifolds without fixed points are the Seifert manifolds. The case of 3-dimensional $S^1$-manifolds with or without fixed points, also called generalized Seifert manifolds, was classified by Orlik and Raymond.

Briefly speaking, the equivariant diffeomorphism type of a 3-dimensional $S^1$-manifold $M$ is determined by the orbifold type of the quotient space $M/{S^1}$, the numerical data of the Seifert fibres over the orbifold points of $M/{S^1}$, and the orbifold Euler number $b$ of the ``fibration'' $M \rightarrow M/{S^1}$.  

Let us write $\epsilon=o$ (orientable) or $n$ (non-orientable) for the orientability of the orbifold surface $M/{S^1}$, and $g$ for its genus. Write $f$ for the number of connected components in the fixed-point set $M^{S^1}=\cup_{i=1}^f \gamma_i$, write $s$ for the number of connected components of special exceptional orbits whose normal spaces viewed as isotropy representations are $\mathbb{Z}_2 \overset{\text{reflect}}{\curvearrowright} \R^2$ such that $\exp(\pi \sqrt{-1})\cdot (x,y)=(x,-y)$ for any $(x,y)\in \R^2$, and record a coprime integer pair $(\mu_j,\,\nu_j)$ for each Seifert fiber in $M^{\mathbb{Z}/\mu_j}$ whose $2$-dimensional normal space viewed as an isotropy representation is $\mathbb{Z}_{\mu_j} \overset{\text{rotate}}{\curvearrowright} \R^2$ such that $\exp(\frac{2\pi \sqrt{-1}}{\mu_j}) \cdot z = \exp(\frac{2\pi \nu_j \sqrt{-1}}{\mu_j}) z$ for any $z\in \R^2\cong \C$.

The non-orientability of $M$ is equivalent to either $\epsilon =n$ (the quotient $M/S^1$ is non-orientable) or $s>0$ (there are special exceptional orbits). 

All the above mentioned data together gives a numerical criterion of classifying $3$-dimensional $S^1$-manifolds.

\begin{thm}[Orlik-Raymond classification of closed 3d $S^1$-manifolds, \cite{Ra68,OR68}]
Let $S^1$ act effectively and smoothly on a closed, connected smooth $3$-dimensional manifold $M$. Then the orbit invariants
\[
\big\{b;(\epsilon,g,f,s);(\mu_1,\,\nu_1),\ldots,(\mu_r,\,\nu_r)\big\}
\]
subject to certain conditions, determine $M$ up to equivariant diffeomorphism.
\end{thm}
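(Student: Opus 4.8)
The plan is to follow the standard orbit-type analysis for smooth $S^1$-actions: first understand the local equivariant structure around each orbit, then reassemble $M$ from these local pieces over the quotient surface $M/S^1$, and finally show the assembled invariants form a complete set. First I would apply the differentiable slice theorem to each orbit $\mathcal{O}_p \cong S^1/(S^1)_p$. Since every closed subgroup of $S^1$ is either $S^1$, a finite cyclic group $\Z/m$, or trivial, the isotropy $(S^1)_p$ falls into finitely many types, and effectiveness restricts the admissible slice representations. Enumerating them in dimension $3$ yields exactly the local models recorded by the invariants: a fixed point has slice $\R \oplus \R^2$ with $S^1$ rotating $\R^2$, so the fixed set is a disjoint union of circles (counted by $f$); an orbit with stabilizer $\Z/m$ acting by rotation on the normal $\R^2$ is an exceptional fibre carrying Seifert data $(m_i,n_i)$; and an orbit with a $\Z/2$ reflecting a normal direction produces the special boundary behaviour counted by $s$. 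Each such orbit then has a standard $S^1$-invariant tubular neighbourhood of the form $S^1 \times_{(S^1)_p} V$.

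Next I would analyse the quotient map $\pi\colon M \to M/S^1$. The slice descriptions show that $M/S^1$ is a compact surface whose interior cone points are the images of exceptional orbits and whose boundary circles are the images of fixed circles and of the $\Z/2$-reflection orbits; reading off its orientability and genus produces $(\epsilon,g)$. Removing open invariant tubular neighbourhoods of all singular and special orbits leaves a region over which the action is free, so the restriction of $\pi$ there is a principal $S^1$-bundle over a surface with boundary, hence trivial. Consequently $M$ is recovered by regluing the finitely many standard neighbourhood pieces to this trivial bundle along their boundary tori $T^2$. The gluing maps are equivariant diffeomorphisms of $T^2$, classified up to the relevant isotopy by their action on $H_1(T^2)$; after normalizing each Seifert pair (say $0 \le n_i < m_i$ with $\gcd(m_i,n_i)=1$), the remaining gluing ambiguity collapses to a single integer, the orbifold Euler number $b$. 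This produces the full invariant $\{b;(\epsilon,g,f,s);(m_1,n_1),\ldots,(m_r,n_r)\}$, and running the construction backwards from any admissible tuple realizes a closed, connected $S^1$-manifold having exactly these invariants, which settles the converse (realizability) half of the statement.

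The main obstacle is the completeness half: showing that two $S^1$-manifolds with identical invariants are \emph{equivariantly} diffeomorphic, not merely that their decomposition pieces match abstractly. This requires an equivariant isotopy argument establishing that each standard piece has an essentially unique invariant tubular neighbourhood, and that any two regluings inducing the same data on $H_1$ of the boundary tori differ by an equivariant diffeomorphism whose effect is absorbed into the normalization of the $n_i$ and of $b$. Keeping careful track of orientations, and of the interaction between the non-orientable case (recorded by $\epsilon$) and the reflection orbits (recorded by $s$), is the delicate bookkeeping that both underlies the admissibility conditions alluded to in the statement and makes the invariant genuinely complete.
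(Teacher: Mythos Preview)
The paper does not prove this theorem at all: it is quoted as a classical result from \cite{Ra68,OR68}, and the only remark about the argument is the single sentence ``The proof of this theorem is by equivariant cutting and pasting.'' Your outline is precisely a fleshing-out of that phrase---slice theorem, enumeration of isotropy types, excising invariant tubular neighbourhoods and regluing over a trivial principal bundle on the punctured quotient surface---so there is no discrepancy in approach to discuss; you have simply supplied the details that the paper deliberately omits by citation.
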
 

The proof of this theorem is by equivariant cutting and pasting, and furthermore inspires us to compute the equivariant cohomology using Mayer-Vietoris sequences and classify equivariantly formal $S^1$-actions on $3$-dimensional manifolds.

\begin{thm}[Equivariantly formal 3d $S^1$-manifold, \cite{He17} p.\,258 Thm 4.8]\label{3dFormal}
A closed $3$-dimensional $S^1$-manifold $M = \big\{b;(\epsilon,g,f,s);(\mu_1,\,\nu_1),\ldots,(\mu_r,\,\nu_r)\big\}$ is $S^1$-equivariantly formal if and only if $f>0,\,b=0$ and one of the following three constraints holds
\[
\begin{cases}
\hfill \epsilon=o,\,g=0,\,s=0~\\
\hfill \epsilon=o,\,g=0,\,s=1~\\
\hfill \epsilon=n,\,g=1,\,s=0.\\
\end{cases}
\]
\end{thm}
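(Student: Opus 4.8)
The plan is to convert the statement into a single numerical identity and then read it off from the Orlik--Raymond structure. By the cohomology inequality criterion (Theorem~\ref{CohomIneq}), the $S^1$-action is equivariantly formal if and only if $\sum\dim H^*(M^{S^1})=\sum\dim H^*(M)$. Since $M^{S^1}$ is a disjoint union of $f$ circles, the left-hand side equals $2f$, so the whole theorem reduces to deciding exactly when $\sum\dim H^*(M;\mathbb{Q})=2f$. The necessity of $f>0$ is then immediate from Corollary~\ref{Nonempty}: an equivariantly formal action has non-empty fixed-point set, so $f\geq 1$.

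The core of the argument is to compute $\sum\dim H^*(M;\mathbb{Q})$ from the invariants $\{b;(\epsilon,g,f,s);(m_1,n_1),\ldots,(m_r,n_r)\}$. I would cut $M$ equivariantly into tubular neighborhoods of the fixed circles, of the exceptional orbits (with $\mathbb{Z}_{m_i}$ rotation isotropy) and of the reflector circles (with $\mathbb{Z}_2$ reflection isotropy), together with the free part lying over the interior of the quotient orbifold $B=M/S^1$, and then run a Mayer--Vietoris computation. The first simplification is that over $\mathbb{Q}$ a neighborhood of an exceptional orbit is a rational homology solid torus, because a finite cyclic group $\mathbb{Z}_{m_i}$ with $m_i\geq 2$ has trivial rational cohomology. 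Consequently the pairs $(m_i,n_i)$ and their number $r$ contribute nothing to $\sum\dim H^*(M;\mathbb{Q})$, which is precisely why they are absent from the list of constraints; the problem collapses to the reduced data $(\epsilon,g,f,s,b)$.

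Next I would determine the orientability of $M$ and the rank $b_1=\dim H^1(M;\mathbb{Q})$ in terms of $(\epsilon,g,f,s,b)$, after which the full rational Betti vector is forced by $\chi(M)=0$ and Poincar\'e duality: an orientable closed $3$-manifold has $\sum\dim H^*=2+2b_1$, while a non-orientable one has $\sum\dim H^*=2b_1$. Matching against $2f$ gives the two target equalities $b_1=f-1$ in the orientable case and $b_1=f$ in the non-orientable case. The surface $B$ carrying its $f$ fixed-circle boundaries is homotopy equivalent to a wedge of circles whose count is governed by $g$, and the Seifert gluing furnishes an explicit presentation of $H_1(M)$; from this one checks that the configurations $(\epsilon=o,g=0,s=0)$, $(\epsilon=o,g=0,s=1)$ and $(\epsilon=n,g=1,s=0)$ are exactly the ones attaining the minimal value, while any extra genus, extra reflector, or nonzero $b$ strictly enlarges $b_1$. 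I note that the direction of this last inequality is automatic: since Theorem~\ref{CohomIneq} always gives $\sum\dim H^*(M)\geq 2f$, any non-formal configuration necessarily has total Betti number strictly above $2f$, so I only need to verify that these three configurations realize equality and that every other admissible reduced datum does not.

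I expect the main obstacle to be the precise computation of $b_1(M)$ together with the orientability of $M$ when the base is non-orientable ($\epsilon=n$) or carries reflector circles ($s\geq 1$), and in particular pinning down the exact role of the Euler number $b$ once fixed circles are present. The reflector contribution and the non-orientable base both affect orientability of the total space and the rank of $H_1$ over $\mathbb{Q}$ in a way that is easy to get off by one, and verifying that turning on $b$ forces strict inequality is equally delicate; carefully tracking these through the Seifert gluing is the heart of the matter. Once the explicit $H_1$-presentation is in hand, the remaining case analysis isolating exactly the three admissible constraints is routine.
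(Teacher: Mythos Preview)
The paper does not actually prove this theorem; it is quoted from \cite{He17}, and the surrounding text only remarks that the Orlik--Raymond equivariant cutting-and-pasting ``inspires one to compute its equivariant cohomology using Mayer--Vietoris sequences and classify equivariantly formal $S^1$-actions on 3d manifolds.'' Your outline is precisely this approach: reduce via Theorem~\ref{CohomIneq} to the equality $\sum\dim H^*(M;\mathbb{Q})=2f$, then compute the rational Betti numbers from the Orlik--Raymond pieces by Mayer--Vietoris, noting that the exceptional-orbit tubes are rational solid tori and hence invisible. So at the level of strategy you agree with what the paper indicates.

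That said, your proposal is a plan rather than a proof: the step you flag as ``the heart of the matter''---the exact determination of orientability of $M$ and of $b_1(M;\mathbb{Q})$ as a function of $(\epsilon,g,f,s,b)$, including the role of reflector circles and of the invariant $b$ when $f>0$---is where all the content lives, and you have not carried it out. In particular, one must know how the Orlik--Raymond normalization treats $b$ in the presence of fixed circles (whether it remains a free parameter or is partly absorbed), and must verify case by case that each of the three listed configurations gives exactly $b_1=f-1$ (orientable total space) or $b_1=f$ (non-orientable total space) while every other admissible datum strictly overshoots. Until that computation is written down, the argument is incomplete; the reduction you give is correct, but the classification itself still has to be checked against the explicit $H_1$-presentations coming from the Seifert gluing.
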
 

In the author's proof of the above theorem on equivariantly formal $3$-dimensional $S^1$-manifold, it was shown that the elements of the equivariant cohomology $H_{S^1}^*(M)$ have the following expression when localized to $H^*_{S^1}(M^{S^1})=\bigoplus_{i=1}^f (H^*_{S^1}(pt)\otimes H^*(\gamma_i))=\bigoplus_{i=1}^f (\mathbb{Q}[\alpha]\otimes H^*(\gamma_i))$:
\[
\sum_{i=1}^f\big(P_i(\alpha)\delta_i+Q_i(\alpha)\theta_i\big) \in \bigoplus_{i=1}^f \big(\mathbb{Q}[\alpha]\otimes H^*(\gamma_i)\big)
\]
where $P_i,\,Q_i \in \mathbb{Q}[\alpha]$ are polynomials and $\delta_i,\theta_i$ are generators of $H^0(\gamma_i,\Z),H^1(\gamma_i,\Z)$. Moreover,
\begin{enumerate}
	\item in the case of $\epsilon=o,\,g=0,\,s=0$ such that the 3d manifold $M$ is orientable and the $S^1$-action preserves an orientation (we will call this case \textbf{$S^1$-orientable}), those $P_i,Q_i$ are subject to two relations:
	\[
	P_1(0)=P_2(0)=\cdots=P_f(0) \mbox{ and } \sum_{i=1}^f Q_i(0)=0
	\]
	where the second one is obtained under the assumption that after fixing an orientation on $M$, the $\theta_i$'s represent the induced orientations on $\gamma_i$'s (see Subsubsection \ref{subsub:orientation}).
	\item in the two cases of $\epsilon=o,\,g=0,\,s=1$ and $\epsilon=n,\,g=1,\,s=0$ such that $M$ is either non-orientable, or orientable but the $S^1$-action does not preserve an orientation (we will call these cases \textbf{non-$S^1$-orientable}), those $P_i,Q_i$ are subject to the relation:
	\[
	P_1(0)=P_2(0)=\cdots=P_f(0).
	\]
\end{enumerate}

Back to an odd GKM action $T\curvearrowright M^{2n+1}$, let $C_\lambda$ be a component of weight $\lambda$ in the 1-skeleton $M_1$. The codimension-$1$ subtorus $T_\lambda$ acts on $C_\lambda$ trivially and the residual circle $T/T_\lambda$ acts equivariantly formally. The elements of the equivariant cohomology $H^*_T(C_\lambda)$ can be localized to the fixed-point set $\cup_{i=1}^f \gamma_i$ in the form:
\[
\sum_{i=1}^f\big(P_i\delta_i+Q_i\theta_i\big) \in \bigoplus_{i=1}^f \big(\mathbb{S}\mathfrak{t}^*_\Q\otimes H^*(\gamma_i)\big)
\]
where $\delta_i,\theta_i$ are generators of $H^0(\gamma_i,\Z),H^1(\gamma_i,\Z)$. Since $T_\lambda$ acts trivially on $C_\lambda$, we have $H^*_T(C_\lambda)\cong H^*_{T/T_\lambda}(C_\lambda)\otimes H^*_{T_\lambda}(pt)$:
\begin{enumerate}
	\item when $C_\lambda$ is $T/T_\lambda$-oriented and $\theta_i$'s represent the induced orientations on $\gamma_i$'s, the polynomials $P_i,\,Q_i \in \mathbb{S}\mathfrak{t}^*_\Q$ are subject to two congruence relations:
	\begin{equation}\label{Relations1}
	P_1\equiv P_2\equiv \cdots\equiv P_f \mbox{ and } \sum_{i=1}^f Q_i\equiv 0 \mod{\lambda} \tag{$\dag$};
	\end{equation}
	\item when $C_\lambda$ is non-$T/T_\lambda$-orientable, the polynomials $P_i,\,Q_i \in \mathbb{S}\mathfrak{t}^*_\Q$ are subject to the congruence relation:
	\begin{equation*}\label{Relations1'}
	P_1\equiv P_2\equiv \cdots\equiv P_f \mod{\lambda} \tag{$\ddag$}. 
	\end{equation*}
\end{enumerate}

In the next three subsubsections, firstly we show that the vast possibilities of $S^1$-equivariantly formal, closed 3d manifolds can be considerably reduced if we impose extra structures, then we give a method of constructing all the 3d closed, equivariantly formal, $S^1$-manifolds.

\subsubsection{Mapping tori of 2d $S^1$-manifolds} \label{subsubsec:MapTori}
Up to $S^1$-diffeomorphisms, (see \cite{Au04} p.\,18-20) there are two orientable 2-dimensional $S^1$-manifolds: the rank-2 torus $T^2=S^1\times S^1$ with $S^1$ acting on the first factor, the 2-sphere $S^2$ with the standard $S^1$-action; their non-orientable $\Z_2$-quotients: the Klein bottle $K=S^1\times S^1/(z_1,z_2)\sim (-z_1,\bar{z}_2)$ and the real projective plane $\R P^2 = S^2/\Z_2$ with the induced $S^1$-actions. 

Let $N$ be one of the four 2d $S^1$-manifolds, and $\phi$ be an $S^1$-automorphism on $N$. We form the $S^1$-equivariant mapping torus
\[
N_\phi \triangleq \frac{N\times [0,1]}{N\times\{0\}\sim_\phi N\times\{1\}}
\]
whose $S^1$-diffeomorphism type is determined by the $S^1$-isotopy type of $\phi$. 

For $N=T^2$, an $S^1$-automorphism $\phi$ induces an automorphism on the orbit space: $\phi/S^1: \{1\}\times S^1\rightarrow \{1\}\times S^1$ which is isotopic to the identity map $z\mapsto z$ or the inverse map $z\mapsto z^{-1}$ where $z\in S^1 \subset \C$. In addition to $\phi/S^1$, the $S^1$-isotopy type of $\phi$ is determined by the number of times that $\phi(\{1\}\times S^1)$ wraps along the direction of $S^1 \times \{1\}$, i.e. the mapping degree of $pr_1\circ \phi:\{1\}\times S^1 \rightarrow S^1 \times \{1\}$ where $pr_1$ projects $T^2$ to its first factor. Up to $S^1$-isotopy, we have $\phi:T^2\rightarrow T^2:(z_1,z_2)\mapsto (z_1 z^k_2,z_2)$ or $(z_1 z^k_2,z^{-1}_2)$ for some integer $k$.  However, those mapping tori $T^2_\phi$ do not have $S^1$-fixed points.

For $N=S^2,\,K,\,\R P^2$, the orbit space $N/S^1$ is an interval $[0,1]$. There is a cross section for the projection $N\rightarrow N/S^1=[0,1]$, hence the $S^1$-isotopy type of $\phi:N\rightarrow N$ is determined by the isotopy type of $\phi/S^1:[0,1]\rightarrow [0,1]$. If $N=S^2,\,K$, the two end points of $N/S^1=[0,1]$ represent two $S^1$-fixed points or two $S^1/\Z_2$ orbits respectively. Therefore, up to isotopy, the automorphism $\phi/S^1:[0,1]\rightarrow [0,1]$ is either the identity $t\mapsto t$ or $t\mapsto 1-t$ swapping the two end pints. If $N=\R P^2$, one end point of $\R P^2/S^1=[0,1]$ represents an $S^1$-fixed point and the other one represents an $S^1/\Z_2$ orbit. Therefore, up to isotopy, the automorphism $\phi/S^1:[0,1]\rightarrow [0,1]$ is the identity map. 

If a mapping torus $N_\phi$ is $S^1$-equivariantly formal, then it must have $S^1$-fixed points, hence $N=S^2,\,\R P^2$. For $N=S^2$ and $\phi=id$, we have $N_\phi=S^2\times S^1$. This is a 3d $S^1$-orientable, equivariantly formal manifold  $\big(b=0;(\epsilon=o,g=0,f=2,s=0)\big)$ with two $S^1$-fixed circles. We have
\[H^*_{S^1}(S^2\times S^1)\cong H^*_{S^1}(S^2)\otimes H^*(S^1)\cong\big\{ \big(P_N(\alpha),Q_N(\alpha) \theta; P_S(\alpha),Q_S(\alpha)\theta\big)\mid P_N(0)=P_S(0),\,Q_N(0)=Q_S(0)\big\}.\] 
For $N=S^2$ and $\phi$ being the antipodal map that swaps the north pole with the south pole, we have $N_\phi=S^2\times_{\Z_2} S^1$. This is a 3d non-$S^1$-orientable, equivariantly formal manifold $\big(b=0;(\epsilon=n,g=1,f=1,s=0)\big)$ with one $S^1$-fixed circle. We have
\[
H^*_{S^1}(S^2\times_{\Z_2} S^1)\cong \big\{ \big(P(\alpha),Q(\alpha) \theta\big) \big\} \cong H^*_{S^1}(pt)\otimes H^*(S^1).
\] 
For $N=\R P^2$ and $\phi=id$, we have $N_\phi=\R P^2\times S^1$. This is a 3d non-$S^1$-orientable, equivariantly formal manifold $\big(b=0;(\epsilon=o,g=0,f=1,s=1)\big)$ with one $S^1$-fixed circle. We have
\[
H^*_{S^1}(\R P^2\times S^1)\cong H^*_{S^1}(\R P^2)\otimes H^*(S^1) \cong \big\{ \big(P(\alpha),Q(\alpha) \theta\big) \big\} \cong H^*_{S^1}(pt)\otimes H^*(S^1).
\]

\subsubsection{Extendible $S^1$-actions on 3d manifolds}\label{subsubsec:ExtendibleAction}
We say an effective action $S^1\curvearrowright M^3$ is \textbf{extendible} if it is a subaction of an effective action $G\curvearrowright M^3$ where $G$ is a compact connected Lie group properly containing $S^1$. If $S^1$ is a maximal torus of $G$, then $G$ is an $SU(2)$ or $SO(3)$. Otherwise, $S^1$ is contained as the first factor $S^1\times \{1\}$ of a rank-2 subtorus $T^2\subseteq G$. Now we assume the extended group $G$ is one of $SU(2),\,SO(3),\,T^2$ and note that $\dim M^3/G<\dim M^3/S^1=2$. 

If $\dim M^3/G = 0$, then $G$ is $SU(2)\cong S^3$ or $SO(3)\cong \R P^3 \cong S^3/\Z_2$, hence $M$ is $S^3/\Gamma$ for a finite subgroup $\Gamma \subset SU(2)$. On one hand, $M$ is a rational homology $3$-sphere. On the other hand, the $S^1$-action on $M$ is induced from $SU(2)\cong S^3$ which has at least an $S^1$-fixed circle. Then $\dim H^*(M)=2$ and $\dim H^*(M^{S^1})\geq 2$. By the Formality Criterion \ref{CohomIneq}, the $S^1$-action on $M$ is equivariantly formal. We have the $H^*_{S^1}(pt)$-module isomorphisms 
$$H^*_{S^1}(M) \cong H^*_{S^1}(pt)\otimes H^*(M) \cong H^*_{S^1}(pt)\otimes H^*(S^3)$$ which are also $H^*_{S^1}(pt)$-algebra isomorphisms because the degree-3 generator must have zero square. The localized expression is
\[
H^*_{S^1}(M) \cong H^*_{S^1}(S^3)\cong \big\{ \big(P(\alpha),Q(\alpha) \theta\big) \mid Q(0)=0\big\}.
\]

If $\dim M^3/G = 1$, Neumann (\cite{Ne68} p.\,221) showed that $G$ is $SO(3)$ or $T^2$. The types S4-10 of Neumann's classification have $S^1$-fixed points and can be verified to be $S^1$-equivariantly formal using Theorem\,\ref{CohomIneq}\,or\,\ref{3dFormal}:
\begin{itemize}
	\item If $G$ is $T^2$, we have the types $S4=\R P^2 \times S^1,\,S5=S^2\times_{\Z_2} S^1,\,S6=S^2\times S^1$ whose $S^1$-equivariant cohomology has been given in the previous discussion about mapping tori. For coprime integers $0<q<p$, the $3$d lens spaces $S7=L(p,q)\cong S^3/\Z_p$ (including $S^3=L(1,0)$), have the $S^1$-equivariant cohomology
	\[
	H^*_{S^1}(L(p,q))\cong H^*_{S^1}(S^3) \cong \big\{ \big(P(\alpha),Q(\alpha) \theta\big) \mid Q(0)=0\big\}.
	\]
	\item If $G$ is $SO(3)$, the types $S8=\R P^2 \times S^1,\,S9'=S^2\times_{\Z_2} S^1,\,S9=S^2\times S^1$ have appeared as the mapping tori of 2d $SO(3)$-manifolds. The types $S10=\R P^3 \# \R P^3,\,S11= S^3,\, S12=\R P^3$ are rational homology $3$-spheres and have the $S^1$-equivariant cohomology
	\[
	H^*_{S^1}(\R P^3 \# \R P^3)\cong H^*_{S^1}(\R P^3)\cong H^*_{S^1}(S^3) \cong \big\{ \big(P(\alpha),Q(\alpha) \theta\big) \mid Q(0)=0\big\}.
	\]
\end{itemize}

\subsubsection{Constructing the 3d closed equivariantly formal $S^1$-manifolds}\label{subsub:constuct3dFromal}
Using connected sums, Raymond gave a construction of all 3d closed $S^1$-manifolds that have non-empty fixed-point sets. In particular, we can apply his results to construct the 3d equivariantly formal $S^1$-manifolds. By \cite[p.\,58-59, Thm 1.(ii)]{Ra68}, the orbit data $\big\{b=0;(\epsilon=o,g=0,f>0,s=0)\big\}$, $\big\{b=0;(\epsilon=o,g=0,f>0,s=1)\big\}$ and $\big\{b=0;(\epsilon=n,g=1,f>0,s=0)\big\}$ respectively can be realized as
\[
S^3\#\big(\#_{f-1} (S^2\times S^1)\big)
\qquad
S^3\#\big(\#_{f-1} (S^2\times S^1)\big)\#(\R P^2\times S^1)
\qquad
(S^2\times_{\Z_2}S^1)\#\big(\#_{f-1} (S^2\times S^1)\big)
\]
where $\#_{f-1}$ means taking connected sum of $f-1$ copies. Let $M'$ be one of the above three types. By \cite[p.\,72, Thm 4]{Ra68}, we can additionally realize the orbit invariants $(\mu_i,\nu_i)$ by taking connected sum with a 3d Lens space $L(\mu_i,\nu_i)$ with an $S^1$-action given in \cite[p.\,70-71, Sec.\,7]{Ra68}. All the 3d closed equivariantly formal $S^1$-manifolds satisfying the condition in Theorem \ref{3dFormal} can be constructed as
\[
M'\#\big(\#_{i=1}^r L(\mu_i,\nu_i)\big).
\]
For details of equivariant connected sum, see \cite[p.\,71-72, Sec.\,8]{Ra68}.

Another idea of constructing the 3d equivariantly formal $S^1$-manifolds was suggested by one of the anonymous referees. We elaborate on that idea and give the details as follows.

Let $M_1$, $M_2$ be 3d closed $S^1$-manifolds such that one of them has a non-empty fixed-point set, we will define a new 3d closed $S^1$-manifold $M_1\natural M_2$ with a non-empty fixed-point set. First, we delete an $S^1$-invariant
neighbourhood of a free $S^1$-orbit from each of $M_i$. Such a neighbourhood is $S^1$-diffeomorphic to a solid torus $S^1\times D^2$ with an $S^1$-action concentrating on the $S^1$-factor. Then, we glue $M_i\setminus (S^1\times D^2),\,i=1,2$ along the boundary tori $S^1\times S^1$ via an $S^1$-equivariant automorphism $\varphi$ on $S^1\times S^1$, and denote the glued manifold by $M_1\natural_\varphi M_2$. If $M_i$'s are both $S^1$-orientable, the automorphism has to be orientation-reversing in order to make the glued manifold orientable. As we observed in Subsubsection \ref{subsubsec:MapTori}, the $S^1$-equivariant automorphism $\varphi$ on $S^1\times S^1$ is not unique. Note that, at the level of orbit space, we have $$(M_1\natural_\varphi M_2)/S^1=(M_1/S^1) \# (M_2/S^1).$$ Using Raymond and Orlik's local analysis of $S^1$-manifolds \cite{Ra68,OR68}, with different $\varphi$, the orbit data of the glued manifolds $M_1\natural_\varphi M_2$ will at most differ on the Euler number $b$. Also note that $$(M_1\natural_\varphi M_2)^{S^1}=M_1^{S^1} \cup M_2^{S^1}.$$If one of $M_i$ has non-empty fixed-point set, then that Euler number $b$ of $M_1\natural_\varphi M_2$ vanishes \cite[p.\,69, Cor 2a]{Ra68}. Hence the $S^1$-diffeomorphism type of $M_1\natural_\varphi M_2$ is independent of $\varphi$, and we will denote it by $M_1\natural M_2$.

Let $S^3\subset \C^2$ be induced with the standard $S^1$-action on the first coordinate. Then $S^3$ realizes the orbit data $\big\{b=0;(\epsilon=o,g=0,f=1,s=0)\big\}$. In Subsubsection \ref{subsubsec:MapTori}, we have seen that $\R P^2\times S^1$ and $S^2 \times_{\Z_2} S^1$ with $S^1$-actions on the first factors respectively realize the orbit data $\big\{b=0;(\epsilon=o,g=0,f=1,s=1)\big\}$, $\big\{b=0;(\epsilon=n,g=1,f=1,s=0)\big\}$. Using the previous observations $(M_1\natural M_2)/S^1=(M_1/S^1) \# (M_2/S^1)$, $(M_1\natural M_2)^{S^1}=M_1^{S^1} \cup M_2^{S^1}$, we can realize the orbit data $\big\{b=0;(\epsilon=o,g=0,f>0,s=0)\big\}$, $\big\{b=0;(\epsilon=o,g=0,f>0,s=1)\big\}$ and $\big\{b=0;(\epsilon=n,g=1,f>0,s=0)\big\}$ respectively as
\[
\natural_f S^3 \qquad\qquad (\R P^2\times S^1)\natural (\natural_{f-1} S^3) \qquad\qquad  (S^2 \times_{\Z_2} S^1)\natural (\natural_{f-1} S^3)
\]
where $\natural_{f-1}$ means taking the $\natural$ construction of $f-1$ copies. Let $M'$ be one of the above three types. Let $M''$ be a Seifert manifold that realizes the orbit data $\big\{b=0;(\epsilon=o,g=0,f=0,s=0);(\mu_1,\,\nu_1),\ldots,(\mu_r,\,\nu_r)\big\}$. A 3d closed equivariantly formal $S^1$-manifolds satisfying the condition in Theorem \ref{3dFormal} can also be constructed as
\[
M'\natural M''.
\]

\subsection{GKM graphs and a GKM-type theorem in odd dimensions}
We will construct GKM graphs for odd-dimensional GKM manifolds and give a graph-theoretic computation of their equivariant cohomology rings. 

In the even-dimensional case, each $S^2$ or $\R P^2$ in the 1-skeleton gives an edge connecting two vertices in a GKM graph. However, in the odd-dimensional case, as we have seen in the previous discussions, a component in the 1-skeleton could contain any positive number of fixed circles, in contrast to the exactly two fixed points of $S^2$. Due to this difference, the construction of the graphs in odd dimensions will be slightly more complicated.

\begin{dfn}\label{dfn:oddGKMGraph}
	The \textbf{odd GKM} (\textbf{$1$-skeleton}) graph for an odd GKM action $T\curvearrowright M^{2n+1}$ consists of: 
	\begin{description}
		\item[Vertices$\,\&\,$Weights] There are two types of vertices:
		\begin{itemize}
			\item [$\circ$] for each fixed circle $\gamma \subset M^T$,
			\item [$\Box$] for each $3$-dimensional component $C$ in $M^{T_\lambda}$ of some codimension-1 subtorus $T_\lambda$. The $\Box$ is weighted by $(\lambda,\varepsilon)$ where $\varepsilon=O$ (orientable) or $N$ (non-orientable) for the $T/T_\lambda$-orientability of $C$. If the $T/T_\lambda$-orientability of $C$ is known in the context, then we might drop the symbol $\varepsilon$.
		\end{itemize}
		
		\item[Edges] If a $3$-dimensional component $C$ contains a fixed circle $\gamma$, then an edge joins a $\Box$ that represents $C$ to a $\circ$ that represents $\gamma$. No edges directly join $\circ$ to $\circ$, nor $\Box$ to $\Box$.
		
	\end{description}
\end{dfn}

\begin{rmk}
	We point out some comparisons between the even GKM graphs and odd GKM graphs.
	\begin{itemize}
		\item In contrast to labelling weights on the edges of an even GKM graph, we label weights on the $\Box$ vertices of an odd GKM graph. The seemly difference is actually in the same spirit, because the weights are associated to the components of the $1$-skeleton, which are represented by edges in an even GKM graph while by $\Box$ vertices in an odd GKM graph.
		\item By the odd-dimensional GKM condition \ref{OddGKMCond}, a fixed circle has exactly $n$ pair-wise independent weights. Thus each $\circ$, representing a fixed circle, is joined via exactly $n$ edges to $n$ $\Box$'s. A 3-dimensional component $C\subseteq M_1$ can contain any positive number of fixed circles, thus a $\Box$ can be joined via any positive number of edges to $\circ$'s. 
	\end{itemize}
\end{rmk}
\begin{exm}
	All the $3$d closed equivariantly formal $S^1$-manifolds, that we used in Theorem \ref{3dFormal}, are the building blocks of the odd-dimensional GKM theory. See Subsubsection \ref{subsub:constuct3dFromal} for the two constructions of the $3$d equivariantly formal $S^1$-manifolds. For the equivariantly formal, $S^1$-orientable manifolds $M^3=\big\{b=0;\epsilon=o,g=0,f>0,s=0;(\mu_1,\,\nu_1),\ldots,(\mu_r,\,\nu_r)\big\}$, the odd GKM graph (Figure \ref{fig:3d}) consists of a unique $\Box$-vertex of weight $\alpha$, and $f$ edges joining that one $\Box$-vertex with $f$ $\circ$-vertices. It is worth noting that the existence of the invariants $(\mu_i,\,\nu_i)$ does not affect the odd GKM graph nor the rational equivariant cohomology.
	\begin{figure}[H]
		\centering
		\begin{tikzpicture}[scale=1.5]
		\path (0:0cm) node[draw,rectangle,scale=1.2,label=left:{$(C,\alpha)$}] (v0){};
		\path (0:1cm) node[draw,circle,label={$\gamma_{_1}$}] (v1){};
		\path (72:1cm) node[draw,circle,label=left:{$\gamma_{_2}$}] (v2){};
		\path (3*72:1cm) node[draw,circle,label=left:{$\gamma_{_{f-1}}$}] (v4){};
		\path (4*72:1cm) node[draw,circle,label=right:{$\gamma_{_{f}}$}] (v5){};
		
		\draw (v0) -- (v1)
		(v0) -- (v2)
		(v0) -- (v4)
		(v0) -- (v5);
		\draw[dashed] (2*50:0.85cm) arc (2*50:2*100:0.85cm);
		\end{tikzpicture}
		\caption{Odd GKM Graph of a 3d equivariantly formal $S^1$-manifold.}\label{fig:3d}
	\end{figure}
	\noindent The equivariant cohomology is $H^*_{S^1}(M)=\big\{(P_1,Q_1\theta;\ldots;P_f,Q_f\theta) \in (\mathbb{Q}[\alpha] \oplus \mathbb{Q}[\alpha]\theta)^{\oplus f}  \mid P_1(0)=\cdots=P_f(0),\, \sum_{i=1}^f Q_i(0)=0\big\}$. For the equivariantly formal, non-$S^1$-orientable manifolds $M^3=\big\{b=0;\epsilon=o,g=0,f>0,s=1;(\mu_1,\,\nu_1),\ldots,(\mu_r,\,\nu_r)\big\} \text{ or } \big\{b=0;\epsilon=n,g=1,f>0,s=0;(\mu_1,\,\nu_1),\ldots,(\mu_r,\,\nu_r)\big\}$, the odd GKM graph looks the same as the oriented case, but the $\Box$ represents an un-orientable 3-dimensional manifold, whose equivariant cohomology is $H^*_{S^1}(M)=\big\{(P_1,Q_1\theta;\ldots;P_f,Q_f\theta) \in (\mathbb{Q}[\alpha] \oplus \mathbb{Q}[\alpha]\theta)^{\oplus f}  \mid P_1(0)=\cdots=P_f(0)\big\}$.
	
\end{exm}

Let us describe a GKM-type theorem for the equivariant cohomology $H^*_T(M^{2n+1})$ in a graph-theoretic way. First, if a $3$-dimensional component $C \subseteq M_1$ is orientable, then we choose its orientation in advance. We also choose an orientation $\theta_{\gamma_i}$ for each fixed circle $\gamma_i \subseteq M^T$. We drop the subscript of $\theta_{\gamma_i}$ and simply write $\theta$ universally for every $\gamma_i$. 

\begin{dfn}
	Let $T\curvearrowright M^{2n+1}$ be an odd GKM action with the odd GKM graph $\mathcal{G}$ consisting of two types of vertex sets $V_\circ$ and $V_\Box$ and the edge set $E$. The \textbf{cohomology of the odd GKM graph $\mathcal{G}$}, denoted by $H^*(\mathcal{G})$, is the set of the following paired maps:
	\[
	(P,Q\theta): V_\circ \longrightarrow \mathbb{S}\mathfrak{t}^*_\Q \oplus \mathbb{S}\mathfrak{t}^*_\Q \theta
	\]
	where $\theta$ is a generator of $H^1(S^1,\Z)$, and $P,Q$ satisfy the following congruence relations contributed from each $(\Box,\lambda,\varepsilon)$ representing a $3$-dimensional component $C$ of $M^{T_{\lambda}}$, and its neighbour $\circ$'s representing the fixed circles $\gamma_1,\ldots,\gamma_{k}$ on $C$:
	\begin{itemize}
		\item if $\varepsilon=O$, i.e. $C$ is $T/T_{\lambda}$-oriented, we have
		\begin{equation*}
		P_{\gamma_1}\equiv P_{\gamma_2}\equiv \cdots\equiv P_{\gamma_{k}} \mbox{ and } \sum_{i=1}^k \pm Q_{\gamma_i}\equiv 0 \mod{\lambda} 
		\end{equation*}
		where the sign for each $Q_{\gamma_i}$ is specified by comparing the prechosen orientation $\theta_i$ with the induced orientation of $C$ on $\gamma_i$ (see Subsubsection \ref{subsub:orientation});
		\item if $\varepsilon=N$, i.e. $C$ is non-$T/T_{\lambda}$-orientable, we have 
		\begin{equation*}
		P_{\gamma_1}\equiv P_{\gamma_2}\equiv \cdots\equiv P_{\gamma_{k}} \mod{\lambda}. 
		\end{equation*}
	\end{itemize}
\end{dfn}

\begin{rmk}
	If we reverse the prechosen orientations of the fixed circles $\gamma_i$ or of the $3$-dimensional orientable components $C$ of $M^{T_\lambda}$, then we change the signs in front of $Q_{\gamma_i}$ accordingly. 
\end{rmk}

\begin{rmk}
	The $\mathbb{S}\mathfrak{t}^*_\Q$-algebra structure on $H^*(\mathcal{G})$ is canonically induced from $\mathbb{S}\mathfrak{t}^*_\Q \oplus \mathbb{S}\mathfrak{t}^*_\Q \theta$. Write an element $(P,Q\theta)$ as $(P_\gamma+Q_\gamma\theta)_{\gamma\subset M^T}$. We have $(P_\gamma+Q_\gamma\theta)_{\gamma\subset M^T} + (\bar{P}_\gamma+\bar{Q}_\gamma\theta)_{\gamma\subset M^T}=([P_\gamma+\bar{P}_\gamma]+[Q_\gamma+\bar{Q}_\gamma]\theta)_{\gamma\subset M^T}$. Note $\theta^2=0$, then $(P_\gamma+Q_\gamma\theta)_{\gamma\subset M^T} \cdot (\bar{P}_\gamma+\bar{Q}_\gamma\theta)_{\gamma\subset M^T}=([P_\gamma \bar{P}_\gamma]+[P_\gamma\bar{Q}_\gamma+\bar{P}_\gamma Q_\gamma]\theta)_{\gamma\subset M^T}$. The $\mathbb{S}\mathfrak{t}^*_\Q$-module structure of $H^*_T(M)$ is that, for any polynomial $R \in \mathbb{S}\mathfrak{t}^*_\Q$, we have $R \cdot (P_\gamma+Q_\gamma\theta)_{\gamma\subset M^T} = (R P_\gamma+R Q_\gamma\theta)_{\gamma\subset M^T}$.
\end{rmk}

\begin{thm}[A GKM-type theorem in odd dimensions]\label{thm:OddGKM}
	If an effective $T$-action on an odd-dimensional, possibly non-orientable manifold $M^{2n+1}$ is odd GKM with the graph $\mathcal{G}$, then there is an $\mathbb{S}\mathfrak{t}^*_\Q$-algebra isomorphism:
	\[
	H^*_T(M)\cong H^*(\mathcal{G}).
	\]
\end{thm}

\begin{proof}
	The odd-dimensional GKM condition \ref{OddGKMCond} assumes that the fixed-point set $M^T$ is a non-empty union of isolated circles, and that the $1$-skeleton $M_1$ is a union of $3$-dimensional manifolds with residual circle actions.
	
	An element of $H^*_T(M^T)$ can be written as follows: to each fixed circle $\gamma\subset M^T$, we associate a pair of polynomials  $(P_\gamma\delta_\gamma, Q_\gamma \theta_\gamma)\in \mathbb{S}\mathfrak{t}^*_\Q\otimes H^*(\gamma)$, where $\delta _\gamma,\theta_\gamma$ are the generators of $H^0(\gamma,\Z),H^1(\gamma,\Z)$. Equivalently, we have a paired map $(P,Q\theta): V_\circ \longrightarrow \mathbb{S}\mathfrak{t}^*_\Q \oplus \mathbb{S}\mathfrak{t}^*_\Q \theta$.
	
	By Proposition \ref{InheritFormality} on the inheritance of equivariant formality, the residual action of the circle $T/T_\lambda$ on a $3$-dimensional component $C\subset M^{T_\lambda}$, represented by a $\Box \in V_\Box$, is also equivariantly formal. Then we can use the Classification Theorem \ref{3dFormal} of equivariantly formal $S^1$-actions on closed $3$-dimensional manifolds, and the congruence relations \eqref{Relations1}, \eqref{Relations1'} therein to describe the embedded image $\mathrm{Im}\big( H^*_T(C) \rightarrow  H^*_T(M^T)\big)$.
	
	The only modifications are the signs in $\sum_{i=1}^k \pm Q_{\gamma_i}$. Notice that in Theorem \ref{3dFormal}, when $C$ is oriented, the orientation forms $\theta_{\gamma_i}$ are induced from $C$, such that the isotropy weight at each $\gamma_i$ is equal to $1$ under the effective residual action of the circle $T/T_\lambda$, or equivalently its isotropy weight under the $T$-action is $\lambda$. However, if we have chosen orientations in advance for the fixed circles $\gamma_i$, then we need to adjust the signs of $Q_{\gamma_i}$ in relation \eqref{Relations1} for the difference of the prechosen orientations and the induced orientations.
	
	The set of paired maps $(P,Q\theta): V_\circ \longrightarrow \mathbb{S}\mathfrak{t}^*_\Q \oplus \mathbb{S}\mathfrak{t}^*_\Q \theta$ that satisfy all the specified congruence relations is exactly the set 
	$$ \bigcap_{\lambda} \Big( \mathrm{Im}\big( H^*_T(M^{T_\lambda}) \rightarrow  H^*_T(M^T)\big)\Big)$$
	where the intersection is taken over all (finitely many) isotropy weights of the action $T\curvearrowright M^{2n+1}$. By the Chang-Skjelbred Lemma \ref{Chang}, we get the full description of $H^*_T(M)$.
\end{proof}

\vskip 20pt
\section{Examples}
\vskip 15pt

In this section, we construct some odd-dimensional GKM manifolds with certain additional geometric or topological structures and apply the odd GKM Theorem \ref{thm:OddGKM} to describe the equivariant cohomology with graphs.

\subsection{Product of an odd GKM manifold and an even GKM manifold}
Given an even GKM action $T^k \curvearrowright M^{2m}$ and an odd GKM action $T^l \curvearrowright N^{2n+1}$, we get the product action $T^k \times T^l \curvearrowright M^{2m}\times N^{2n+1}$ which is odd GKM. Using $T^k \times T^l\cong T^{k+l}$, we identify the weights of $T^k$ and $T^l$ as weights of $T^{k+l}$. The odd GKM graph $\mathcal{G}_{M\times N}$ can be obtained from the even GKM graph $\mathcal{G}_M$ together with the odd GKM graph $\mathcal{G}_N$. First, for each vertex $p$ of $\mathcal{G}_M$, we place a copy of $\mathcal{G}_N$ and denote it by $\mathcal{G}_N^p$. Second, for each solid edge $\overline{pq}$ of $\mathcal{G}_M$ that represents an $S^2$ of weight $\lambda$, we take each $\circ$-vertex of $\mathcal{G}_N^p$ and its corresponding $\circ$-vertex of $\mathcal{G}_N^q$, then join these two $\circ$-vertices via a new $(\Box,\lambda)$ that represents an $S^2\times S^1$ with the residual $S^1$-action concentrated on the $S^2$-factor. Last, for each dotted edge at a vertex $p$ of $\mathcal{G}_M$ that represents a $\R P^2$ of weight $\mu$, we join each $\circ$-vertex of $\mathcal{G}_N^p$ to a new $(\Box,\mu)$ that represents a $\R P^2\times S^1$ with the residual $S^1$-action concentrated on the $\R P^2$-factor.

\begin{rmk}
	More generally, it is possible to describe the GKM graphs of GKM bundles with even dimensional GKM bases and odd dimensional GKM fibres, or with odd dimensional GKM bases and even dimensional GKM fibres.
\end{rmk}

\subsection{Odd GKM actions on contact and cosymplectic manifolds}
The even GKM theory has many applications to certain torus actions on symplectic manifolds. Here we give some odd-dimensional analogues from contact and cosymplectic manifolds. 

\subsubsection{Odd GKM torus actions on contact manifolds}
Let $(M^{2n+1},\eta)$ be a contact manifold with a $T$-action that preserves the contact form $\eta$. For any $\xi \in \mathfrak{t}$, let $\xi_M$ be the corresponding vector field on $M$, then we have a Hamiltonian function $\mu^\xi\triangleq \eta(\xi_M) \in C^\infty(M)$. 

Unlike the symplectic case as in Example \ref{exm:FormalityOfSymp}, we might not have the equality between the fixed-point set $M^T$ and the critical set $Crit(\mu^\xi)$ for a generic $\xi$ in the contact case. The perfection of $\mu^\xi$, the equivariant formality of $T\curvearrowright M$, and even the non-emptiness of $M^T$ are not guaranteed. For instance, the contact toric actions $T^{n+1}\curvearrowright(M^{2n+1},\eta)$ were shown by Lerman (\cite{Le03} p.\,794, Lem\,2.12) that $M^T=\varnothing$, hence these actions can't be equivariantly formal. 

Suppose an action $T^k\curvearrowright (M^{2n+1},\eta)$ satisfies the odd GKM condition \ref{OddGKMCond} and denote its odd GKM graph by $\mathcal{G}_M$. Let $\lambda \in \mathfrak{t}_\Z$ be an isotropy weight and pick any 3d component $C_\lambda \subset M^{T_\lambda}$. It can be checked that, as a $T_\lambda$-fixed submanifold, $(C_\lambda,\eta|_{C_\lambda})$ is contact and is preserved by the residue $T/T_{\lambda}$-action. Niederkruger (\cite{Ni05} p.\,50, Thm\,IV.16) proved the existence of invariant contact forms on any $S^1$-orientable, closed 3d manifold that has non-empty fixed-points. Hence, the 3d contact $T/T_\lambda$-equivariantly formal manifold $(C_\lambda,\eta|_{C_\lambda})$ can be any of the $S^1$-orientable, equivariantly formal, closed 3d manifold $M^3=\big\{g=0,\epsilon=o,f>0,s=0,(\mu_1,\,\nu_1),\ldots,(\mu_r,\,\nu_r)\big\}$. If we want to reduce the possibilities of $C_\lambda$, we could impose extra conditions.

Suppose an odd GKM action $T^k\curvearrowright (M^{2n+1},\eta)$ can be extended to a larger action $T'=T^{k+1}\curvearrowright (M^{2n+1},\eta)$ such that $T^k$ is identified as $T^k\times\{1\}\subset T'$. Then the equivariantly formal, orientation-preserving action $T/T_{\lambda}\curvearrowright C_\lambda$ can be extended to a larger action $T'/T_{\lambda}\curvearrowright C_\lambda$. By the discussion in Subsubsection \ref{subsubsec:ExtendibleAction} on Neumann's results of $T^2$-actions on orientable 3d manifolds, there are two possibilities for $C_\lambda$: $S6=S^2\times S^1$ and Lens spaces $S7=L(p,q)$, both of which have $T^2$-invariant contact structures by Lerman's classification (\cite{Le03} p.\,796, Thm\,2.18) of toric contact manifolds. Each $C_\lambda=S^2\times S^1$ contributes to the GKM graph $\mathcal{G}_M$ a $\Box$ of weight $\lambda$ joining to two $\circ$-vertices $\gamma_1,\gamma_2$, and contributes to $H^*_T(M)$ the congruence relations on the tuple $(P_{\gamma_1},Q_{\gamma_1}\theta;P_{\gamma_2},Q_{\gamma_2}\theta)$:
\begin{align*}
P_{\gamma_1}\equiv P_{\gamma_2} \qquad Q_{\gamma_1}\equiv Q_{\gamma_2} \mod{\lambda}.
\end{align*} 
Each $C_\lambda=L(p,q)$ contributes a $\Box$ of weight $\lambda$ joining to one $\circ$-vertex $\gamma$ and a congruence relation on the duple $(P_{\gamma},Q_{\gamma}\theta)$: 
\begin{align*}
Q_{\gamma}\equiv 0 \mod{\lambda}.
\end{align*} 
\begin{exm}
	$S^{2n}\times S^1 = \big\{(r_1 e^{i\phi_1},\ldots,r_n e^{i\phi_n},r_{n+1};e^{i\phi_{n+1}})\mid r_1^2+\cdots r^2_{n+1}=1\big\}$ has the contact form $\eta=r_1d\phi_1+\cdots+r_{n+1}d\phi_{n+1}$ invariant under the $T^{n+1}$-action $(e^{i\psi_1},\ldots,e^{i\psi_{n+1}})\Cdot (r_1 e^{i\phi_1},\ldots,r_n e^{i\phi_n},r_{n+1};e^{i\phi_{n+1}}) = (r_1 e^{i(\psi_1+\phi_1)},\ldots,r_n e^{i(\psi_n+\phi_n)},r_{n+1};e^{i(\psi_{n+1}+\phi_{n+1})})$. Since the action $T^n\curvearrowright S^{2n}$ has the even GKM graph consisting of two vertices with $n$ edges of weights $\alpha_1,\ldots,\alpha_n$ as in Example \ref{exm:EvenSphere}, the $T^n\times \{1\}$-action on $S^{2n}\times S^1$ is a product of GKM actions and has the odd GKM graph consisting of two $\circ$-vertices joined via $n$ $\Box$-vertices representing $S^2\times S^1$'s of weights $\alpha_1,\ldots,\alpha_n$. We have the localized expression $H^*_{T^n}(S^{2n}\times S^1)\cong \big\{(P_{N},Q_{N}\theta;P_{S},Q_{S}\theta) \mid P_{N}\equiv P_S, \, Q_{N}\equiv Q_S \mod \prod_{j=1}^{n}\alpha_j\big\}$ which is of course isomorphic to  $H^*_{T^n}(S^{2n})\otimes H^*(S^{1})$.
\end{exm} 

Suppose moreover that the extended action $T'=T^{k+1}\curvearrowright (M^{2n+1},\eta)$ is of \textbf{Reeb type}, i.e. there exists $\xi \in \mathfrak{t}'\setminus \mathfrak{t}$ such that $\eta(\xi_M)=1,\,\iota_{\xi_M}d\eta=0$. It can be checked that, for each 3d component $C_\lambda$, the toric action $T'/T_{\lambda}\curvearrowright (C_\lambda, \eta|_{C_\lambda})$ is also of Reeb type. By (\cite{Lih} p.\,5, Prop\,2.3), such a $C_\lambda$ can only be a Lens space but excluding $S^2\times S^1$. Each $C_\lambda$, being a Lens space, has one single $T/T_\lambda$-fixed circle which is also $T$-fixed. Hence, the odd GKM graph $\mathcal{G}_M$ consists of a single $\circ$-vertex $\gamma$ joined to $n$ $\Box$'s of pairwise independent weights $\lambda_1,\ldots,\lambda_n$, such that $ H^*(\mathcal{G}_M)$ consists of the duples $(P_{\gamma},Q_{\gamma}\theta)$ satisfying the congruence relations
\begin{align*}
Q_{\gamma}\equiv 0 \mod{\lambda_1,\ldots,\lambda_n}.
\end{align*} 
Therefore, we have the $H^*_{T}(pt)$-algebra isomorphisms 
$$H^*_{T}(M^{2n+1})\cong H^*(\mathcal{G}_M)=\big\{(P_{\gamma},Q_{\gamma}\theta) \mid Q_{\gamma}\equiv 0 \mod \prod_{j=1}^{n}\lambda_j\big\} \cong H^*_{T}(pt)\otimes H^*(S^{2n+1}).$$
Restricting to ordinary cohomology, we have $H^*(M^{2n+1})\cong H^*(S^{2n+1})$, i.e. $M^{2n+1}$ has to be a rational homology sphere. 
\begin{exm}
	The sphere $S^{2n+1}=\big\{(r_1 e^{i\phi_1},\ldots,r_n e^{i\phi_n},r_{n+1}e^{i\phi_{n+1}})\mid r_1^2+\cdots r^2_{n+1}=1\big\}$ has the contact form $\eta=r_1d\phi_1+\cdots+r_{n+1}d\phi_{n+1}$ invariant under the $T^{n+1}$-action $(e^{i\psi_1},\ldots,e^{i\psi_{n+1}})\Cdot (r_1 e^{i\phi_1},\ldots,r_n e^{i\phi_n},r_{n+1}e^{i\phi_{n+1}}) = (r_1 e^{i(\psi_1+\phi_1)},\ldots,r_n e^{i(\psi_n+\phi_n)},r_{n+1}e^{i(\psi_{n+1}+\phi_{n+1})})$. It is odd GKM under the action of $T^n\times \{1\}$ concentrating on the first $n$ coordinates. Furthermore, given positive integers $0<q_1,\ldots,q_n<p$ such that each $q_j$ is coprime to $p$, we consider the Lens space $L(p;q_1,\ldots,q_n)$ defined as the quotient of a $\Z/p\Z$-action on $S^{2n+1}$: $e^{2\pi i/p}\Cdot (r_1 e^{i\phi_1},\ldots,r_n e^{i\phi_n},r_{n+1}e^{i\phi_{n+1}})= (r_1 e^{2\pi i q_1/p} \cdot e^{i\phi_1}, \ldots,r_n e^{2\pi i q_n/p} \cdot e^{i\phi_n},r_{n+1} e^{2\pi i /p} \cdot e^{i\phi_{n+1}})$. The lens space $L(p;q_1,\ldots,q_n)$ inherits from $S^{2n+1}$ a contact form and a compatible effective $T^{n+1}$-action whose $T^n\times \{1\}$-subaction is odd GKM with the graph consisting of a single $\circ$-vertex $\gamma$ joined to $n$ $\Box$'s representing the 3-dimensional Lens subspaces $L(p;q_i)$ of the fundamental $T^n$-weights $\alpha_i$.
\end{exm}

\subsubsection{Odd GKM torus actions on cosymplectic manifolds}
Tischler \cite{Ti70} proved that if a compact manifold has a nonvanishing closed one-form, then the manifold is a mapping torus. An odd-dimensional compact manifold $M^{2n+1}$ is a \textbf{cosymplectic manifold} if there are a closed one-form $\eta$ and a closed two-form $\omega$ such that $\eta\wedge \omega^n$ is a volume form. H.-J. Li \cite{Li08} proved that a cosymplectic manifold is a mapping torus of a symplectic manifold.

Suppose a cosymplectic manifold $(M,\eta,\omega)$ is acted by a torus $T$ that preserves the closed forms $\eta,\,\omega$. For every $\xi\in \mathfrak{t}$, let $\xi_M$ be the vector field generated by $\xi$ on $M$. Let $R$ be the \textbf{Reeb vector field} on $M$ defined by $\eta(R)=1$ and $\iota_R \omega =0$. Following Albert \cite{Al89}, Guillemin-Miranda-Pires-Scott \cite{GMPS15,GMPS17} and Bozzoni-Goertsches \cite{BG19}, we call the action $T\curvearrowright (M,\eta,\omega)$ to be \textbf{Hamiltonian} if 
\begin{itemize}
	\item $\eta(\xi_M)=0$,
	\item there exists a moment map $\mu:\mathfrak{t}\rightarrow C^\infty(M)$ such that for every $\xi\in \mathfrak{t}$, the function $\mu^\xi\triangleq\mu(\xi)$ satisfies: 
	\begin{itemize}
		\item $\iota_{\xi_M}\omega = d \mu^\xi$;
		\item $\mu^\xi$ is $T$-invariant;
		\item $\mu^\xi$ is Reeb-invariant, i.e. $\mathcal{L}_R (\mu^\xi)=0$ where $\mathcal{L}_R$ is the Lie derivative along $R$.
	\end{itemize} 
\end{itemize}
Assuming there is a compact leaf of the foliation defined by $\eta$, Guillemin-Miranda-Pires-Scott showed that a $T$-Hamiltonian cosymplectic manifold is a mapping torus of a $T$-Hamiltonian symplectic manifold. When the action is \textbf{toric}, i.e. $2\dim T +1 =\dim M$, they showed that the cosymplectic manifold is a product of $S^1$ with a toric symplectic manifold. Bozzoni-Goertsches gave a new proof of the product structure of a toric cosymplectic manifold without the compactness assumption. 

For a non-toric $T$-Hamiltonian cosymplectic manifold $(M,\eta,\omega)$, suppose $M$ has only isolated $T$-fixed circles, the argument of Bozzoni-Goertsches(\cite{BG19} Prop\,3.4 and Thm\,3.7) still works. Hence the action $T\curvearrowright M$ is equivariantly formal, $b_1(M)=1$ and there is a $T$-equivariant cosymplectomorphism 
\[
M \cong N_\phi = \frac{N\times [0,r]}{N\times\{0\}\sim_\phi N\times\{r\}}
\]
where $N$ is a compact connected manifold equipped with a symplectic form $\omega'$ and a Hamiltonian $T$-action; $\phi: N\rightarrow N$ is a $T$-Hamiltonian symplectomorphism; $r$ is a positive real number. We identify $N$ as the submanifold $N\times\{0\}$ in $M\cong N_\phi$. Since $M$ has only isolated $T$-fixed circles, then $N$ has only isolated $T$-fixed points. 

Suppose further that the $1$-skeleton $M_1$ is 3-dimensional, i.e. the action $T\curvearrowright M$ is odd GKM, then the $1$-skeleton $N_1$ is 2-dimensional, i.e. the action $T\curvearrowright N$ is even GKM. The $T$-Hamiltonian symplectomorphism $\phi$ on $N$ naturally induces an automorphism $\tilde{\phi}$ on the even GKM graph $\mathcal{G}_N$. 

\begin{lem}
	The automorphism $\tilde{\phi}$ on $\mathcal{G}_N$ is the identity map. 
\end{lem}
\begin{proof}
	Because of $M\cong N_\phi$ and the requirement that $\mu$ is Reeb-invariant, the moment map $\mu$ essentially lives on the leaf $N$. The restriction $\mu|_N$, also denoted $\mu_N$, is the moment map for the $T$-Hamiltonian action on $(N,\omega)$. Since $N$ has only isolated fixed points, then by the Atiyah-Guillemin-Sternberg convexity theorem, there is a generic $X\in \mathfrak{t}$ such that $\mu^X_N$ attains its minimum at a single fixed point $p_{min}$ of $N$ and its maximum at a single fixed point $p_{max}$. The $T$-Hamiltonian symplectomorphism $\phi$ preserves $\mu_N$, then $\phi(p_{min/max})=p_{min/max}$ on $N$ and $\tilde{\phi}(p_{min/max})=p_{min/max}$ on the graph $\mathcal{G}_N$. Next, let $p \in N^T$ be a vertex of $\mathcal{G}_N$ joined to $p_{min}$ via a unique edge $\overline{p_{min}p}$ of weight $\alpha$. Since $\phi$ is $T$-equivariant, then $\tilde{\phi}$ preserves the weights of the edges on $\mathcal{G}_N$. We already have $\tilde{\phi}(p_{min})=p_{min}$, then $\tilde{\phi}(\overline{p_{min}p})=\overline{p_{min}p}$, hence $\tilde{\phi}(p)=p$. Let $\overline{p_{min}p_1\cdots p_lp_{max}}$ be a path in $\mathcal{G}_N$, then we will have $\tilde{\phi}(\overline{p_{min}p_1\cdots p_lp_{max}})=\overline{p_{min}p_1\cdots p_lp_{max}}$. Note $N$ is connected, then $\mathcal{G}_N$ is connected, hence every edge appears in some path between $p_{min}$ and $p_{max}$. Therefore, $\tilde{\phi}$ fixes every edge and every vertex of $\mathcal{G}_N$.
\end{proof}
\noindent The lemma is equivalent to saying that $\phi$ restricted on the 1-skeleton $N_1$ is isotopic to the identity map $id_{N_1}$. Then we have $M_1\cong {(N_1)}_{\phi}\cong {(N_1)}_{id}=N_1\times S^1$. Therefore, the odd GKM graphs of $M$ and $N\times S^1$ are the same, and so are their $T$-equivariant cohomology rings:
\[
\mathcal{G}_M\cong \mathcal{G}_{N\times S^1}\qquad \qquad H^*_T(M)\cong H^*_T(N\times S^1) \cong H^*_T(N)\otimes H^*(S^1).
\] 

\begin{rmk}
	Without assuming the cosymplectic structure, we can consider the mapping tori of even GKM manifolds directly. Let $\phi:N\rightarrow N$ be an equivariant diffeomorphism on an even GKM, possibly non-orientable, $T$-manifold $N$. The manifold $N$ does not have to be orientable, neither does the mapping torus $N_\phi$ nor its 3d $1$-skeleton $(N_\phi)_1=(N_1)_\phi$. By Subsubsection \ref{subsubsec:MapTori} on mapping tori of 2d $S^1$-manifolds, a component of $(N_1)_\phi$ could be of the orientable type $S^2\times S^1$, or the non-orientable types $S^2\times_{\Z_2}S^1,\, \R P^2 \times S^1$.  
\end{rmk}

\subsection{Odd-dimensional Grassmannians}
	The Grassmannians $G_{2k+1}(\R^{2n+2})$ and $\tilde{G}_{2k+1}(\R^{2n+2})$ are of the odd dimension $(2k+1)(2n-2k+1)$. They are equipped with certain canonical odd GKM $T^n$-actions that commute with the $\Z_2$-cover $\Z_2 \rightarrow \tilde{G}_k(\R^n) \rightarrow G_k(\R^n)$. It turns out that the odd GKM graphs of $G_{2k+1}(\R^{2n+2})$ and $\tilde{G}_{2k+1}(\R^{2n+2})$ are the same, and are closely related with the even GKM graph of certain canonical $T^n$-action on $G_{2k}(\R^{2n})$.
	\begin{exm}
		Let $T^2$ act on $\R^6$ such that the two $S^1$-factors of $T^2$ respectively rotate the first two $\R^2$-summands of $\R^6=\R_1^2\oplus\R_2^2\oplus \R_0^2$. This real $T^2$-representation induces a canonical $T^2$-action on $G_3(\R^6)$ whose fixed-points are real $3$-dimensional $T^2$-subrepresentations of the representation $T^2\curvearrowright \R^6$. The fixed points form two connected components $\gamma_1=\{\R_1^2\oplus L \in G_3(\R^6) \mid  L\in \P(\R_0^2)\},\,\gamma_2=\{\R_2^2\oplus L \in G_3(\R^6) \mid  L\in \P(\R_0^2)\}$, both of which are parametrized by a $\P(\R_0^2)=\R \P^1\cong S^1$. Similarly, there is an induced $T^2$-action on $\tilde{G}_3(\R^6)$ whose fixed points also form two connected components $\tilde{\gamma}_1=\{\R_1^2\oplus L \in \tilde{G}_3(\R^6) \mid  L\in \tilde{G}_1(\R_0^2)\},\,\tilde{\gamma}_2=\{\R_2^2\oplus L \in \tilde{G}_3(\R^6) \mid  L\in \tilde{G}_1(\R_0^2)\}$, both of which are parametrized by a $\tilde{G}_1(\R_0^2)=S^1$. To understand the $1$-skeleta, one can consider the $3$-dimensional real $S^1$-subrepresentations of $\R^6$. The construction is straightforward though tedious, see \cite{He}. Let $\alpha_1,\alpha_2$ be the standard integral basis of the dual Lie algebra of $T^2$. It turns out that the odd GKM graphs of $G_3(\R^6)$ and $\tilde{G}_3(\R^6)$ (Figure \ref{fig:oddGrass}) are the same. For the graph of $\tilde{G}_{3}(\R^{6})$, the four corner $\Box$'s represent $S^3$ and the two middle $\Box$'s represent $S^2\times S^1$ (\cite{He} p.22 Prop 6.6 (4)). For the graph of ${G}_{3}(\R^{6})$, these $\Box$'s represent $\R P^3$ and $S^2\times S^1$ respectively (\cite{He} p.11 (3)). 
		\begin{figure}[H]
			\centering
			\begin{subfigure}[b]{0.45\textwidth}
				\centering
				\begin{tikzpicture}[scale=1]
				\path (-1,0) node[draw,circle,label=:{$\tilde{\gamma}_1$}] (v1){}
				(1,0) node[draw,circle,label=:{$\tilde{\gamma}_2$}] (v2){}
				(0,1) node[draw,rectangle,scale=1.2,label={$\alpha_2+\alpha_1$}] (v3){}
				(0,-1) node[draw,rectangle,scale=1.2,label=below:{$\alpha_2-\alpha_1$}] (v4){}
				(-2,1) node[draw,rectangle,scale=1.2,label={$\alpha_1$}] (v5){}
				(-2,-1) node[draw,rectangle,scale=1.2,label=below:{$\alpha_2$}] (v6){}
				(2,1) node[draw,rectangle,scale=1.2,label={$\alpha_1$}] (v7){}
				(2,-1) node[draw,rectangle,scale=1.2,label=below:{$\alpha_2$}] (v8){};
				
				\draw (v1) -- node {} (v5)
				(v1) -- node {} (v6)
				(v1) -- node {} (v3)
				(v1) -- node {} (v4)
				(v2) -- node {} (v3)
				(v2) -- node {} (v4)
				(v2) -- node {} (v7)
				(v2) -- node {} (v8);
				\end{tikzpicture}
				\caption{Odd GKM Graph of $\tilde{G}_{3}(\R^{6})$}
			\end{subfigure}
			\begin{subfigure}[b]{0.45\textwidth}
				\centering
				\begin{tikzpicture}[scale=1]
				\path (-1,0) node[draw,circle,label=:{$\gamma_1$}] (v1){}
				(1,0) node[draw,circle,label=:{$\gamma_2$}] (v2){}
				(0,1) node[draw,rectangle,scale=1.2,label={$\alpha_2+\alpha_1$}] (v3){}
				(0,-1) node[draw,rectangle,scale=1.2,label=below:{$\alpha_2-\alpha_1$}] (v4){}
				(-2,1) node[draw,rectangle,scale=1.2,label={$\alpha_1$}] (v5){}
				(-2,-1) node[draw,rectangle,scale=1.2,label=below:{$\alpha_2$}] (v6){}
				(2,1) node[draw,rectangle,scale=1.2,label={$\alpha_1$}] (v7){}
				(2,-1) node[draw,rectangle,scale=1.2,label=below:{$\alpha_2$}] (v8){};
				
				\draw (v1) -- node {} (v5)
				(v1) -- node {} (v6)
				(v1) -- node {} (v3)
				(v1) -- node {} (v4)
				(v2) -- node {} (v3)
				(v2) -- node {} (v4)
				(v2) -- node {} (v7)
				(v2) -- node {} (v8);
				\end{tikzpicture}
				\caption{Odd GKM Graph of $G_{3}(\R^{6})$}
			\end{subfigure}
			\caption{Odd GKM Graphs of odd-dim real and oriented Grassmannians.}\label{fig:oddGrass}
		\end{figure}
		\noindent By Theorem \ref{thm:OddGKM}, every equivariant cohomology class of $\tilde{G}_{3}(\R^{6}),\,G_{3}(\R^{6})$ is a tuple $(f_1,g_1\theta;f_2,g_2\theta)$ where $f_i,g_i \in \Q[\alpha_1,\alpha_2]$ satisfy the congruence relations:
		\begin{align*}
		\begin{cases}
		g_1 \equiv 0  \quad  &g_2 \equiv 0 \quad \mod \alpha_1\\
		g_1 \equiv 0  \quad  &g_2 \equiv 0 \quad \mod \alpha_2\\
		f_1 \equiv f_2  \quad &g_1  \equiv g_2 \quad \mod \alpha_2+\alpha_1\\
		f_1 \equiv f_2  \quad &g_1  \equiv g_2 \quad \mod \alpha_2-\alpha_1.
		\end{cases}	
		\end{align*} 
		The first two congruence relations mean that we can write $g_1=h_1\alpha_1\alpha_2,\,g_2=h_2\alpha_1\alpha_2$ for $h_1,h_2 \in \Q[\alpha_1,\alpha_2]$. Since $\alpha_1\alpha_2$ is coprime with $\alpha_2\pm\alpha_1$, after plugging the $h$-expressions of $g_1,g_2$ into the last two congruence relations, we see that $h_1,h_2$ share the same congruence relations with $f_1,f_2$, which can be shown to be exactly the congruence relations of certain canonical even GKM $T^2$-action on $G_2(\R^4)$. Therefore the correspondence $(f_1,g_1\theta;f_2,g_2\theta)\mapsto (f_1,f_2;h_1\alpha_1\alpha_2\theta,h_2\alpha_1\alpha_2\theta)$ gives a ring isomorphism $H^*_{T^2}(\tilde{G}_3(\R^6))\cong H^*_{T^2}(G_3(\R^6))\cong H^*_{T^2}(G_2(\R^4))[r]/r^2$ where $r=\alpha_1\alpha_2\theta$ is of degree $5$.
	\end{exm}
	
	For the general odd-dimensional real and oriented Grassmannians, the details of localizing the equivariant cohomology rings can be found in \cite{He}.

\subsection{Torus subaction of certain cohomogeneity-one action}
	An action $G\curvearrowright M$ is of \textbf{cohomogeneity one} if the quotient space $M/G$ is one-dimensional. In the following, we suppose $M/G$ is an interval $[-1,1]$. By the differentiable slice theorem, Mostert \cite{Mo57} proved that, over the open interval $(-1,1)$ we have an open cylinder $G/H \times (-1,1)$, and over the two endpoints $\{\pm 1\}$ we have $G/K_\pm$ such that $K_\pm \supseteq H$ and $K_\pm/H$ are spheres. Conversely, any sequence of compact Lie groups $G\supseteq K_\pm \supseteq H$ such that $K_\pm/H$ are spheres $S^{n_\pm}$ produce a cohomogeneity-one $G$-manifold with an interval orbit space by forming $M= G/K_- \cup_{\pi_-} (G/H \times (-1,1)) \cup_{\pi_+} G/K_+$ where the gluing takes place at the two ends via the $K_\pm/H\cong S^{n_\pm}$-bundle projections $\pi_\pm: G/H\times \{\pm 1\}\rightarrow G/K_\pm$.
	
	Let us assume $\mathrm{rank}\,G = \mathrm{rank}\,H$. Such a cohomogeneity-one manifold $M=(G,K_+,K_-,H)$ is odd dimensional and is equivariantly formal with respect to the subaction of a maximal torus $T\subseteq H$ by the results of Goertsches and Mare (\cite{GM14b} p.\,37, Cor\,1.3). Note that $T\curvearrowright G/H$ is even GKM. Moreover, for the open dense part $G/H \times (-1,1)\subset M$, the $T$-fixed-point set $(G/H \times (-1,1))^T=(G/H)^T \times (-1,1)$ consists of finite number of intervals whose isotropy weights are inherited from the GKM data of weights of $(G/H)^T$ as in \cite[p.\,28 Thm 2.4]{GHZ06} and Prop.\,\ref{prop:GHZ}. These intervals in $(G/H)^T \times (-1,1)$ are glued at the singular part ${M\setminus (G/H \times (-1,1)) = G/K_- \cup G/K_+}$, whose $T$-actions are also even GKM since $\mathrm{rank}\,G = \mathrm{rank}\,K_\pm$. Therefore, the $T$-action on $M$ is odd GKM. This observation was kindly pointed out by Goertsches and Mare to the author.
	
	By (\cite{GHZ06} p.\,28, (2.9)), in the 1-skeleton of $G/H$, each 2d component with isotropy weight $\alpha$  is acted transitively by a rank-1 semisimple subgroup $G_\alpha \subseteq G$ which is isomorphic to either $SU(2)$ or $SO(3)$. Hence each 3d component in the 1-skeleton of $M$ is acted by an $SU(2)$ or $SO(3)$ nontrivially and nontransitively. By the discussions in Subsubsection \ref{subsubsec:ExtendibleAction}, such an action of $SU(2)$ or $SO(3)$ can be reduced to an effective cohomogeneity-one action of $SO(3)$, and the 3d component is of one of the six equivariant diffeomorphism types: $S8=\R P^2 \times S^1,\,S9'=S^2\times_{\Z_2} S^1,\,S9=S^2\times S^1$ and $S10=\R P^3 \# \R P^3,\,S11= S^3,\, S12=\R P^3$.  
	\begin{exm}
		Consider the $7$-dimensional cohomogeneity-one manifold $(G=U(3),K_-=K_+=U(2)\times U(1),H=U(1)^3)$, which is actually the manifold $N_G^7$ in Hoelscher's classification (\cite{Ho10a} p.131, \cite{Ho10b} p.170). 
		By the above discussion, we need to understand the $1$-skeleta and the GKM graphs of $G/H=Fl(3)$ and $G/K_\pm = \C P^2$ under the canonical $U(1)^3=T^3$-actions of left multiplications. Though the $T^3$-actions are not effective, we can still apply the GKM theories. Let $\alpha_1,\alpha_2,\alpha_3$ be the standard integral basis of the dual Lie algebra of $T^3$. By the results of \cite{GHZ06} and with more details in Sabatini's PhD thesis \cite{Sa09} p.48-49, the GKM graph of $G/H=Fl(3)$ has the symmetric group $S_3$ as vertex set. If two permutations differ by a transposition $(i,j)$, then they are joined via an edge of weight $\alpha_j-\alpha_i$. The GKM graph of $G/K_\pm = \C P^2$ has $\{1,2,3\}$ as vertex set. The vertices $i$ and $j$ are joined via an edge of weight $\alpha_j-\alpha_i$. The fibration projection from the GKM graph of $Fl(3)$ to that of $\C P^2$ is (our notations are slightly different from \cite{Sa09}): the edge joining $(i,j,k)$ to $(j,i,k)$ projects to the point $k$ (see the three thick edges in Figure \ref{fig:P2fl} (A)); the other edges project accordingly to edges.
		\begin{figure}[H]
			\centering 
			\begin{subfigure}[b]{0.45\textwidth}
				\centering
				\begin{tikzpicture}[scale=1.2]
				\path 
				(65:1.5cm) node[draw,fill,circle,inner sep=0pt,minimum size=4pt,label=above:{$(213)$}] (v1) {}
				(115:1.5cm) node[draw,fill,circle,inner sep=0pt,minimum size=4pt,label=above:{$(123)$}] (v2) {}
				(-175:1.5cm) node[draw,fill,circle,inner sep=0pt,minimum size=4pt,label=left:{$(132)$}] (v3) {}
				(-125:1.5cm) node[draw,fill,circle,inner sep=0pt,minimum size=4pt,label=left:{$(312)$}] (v4) {}
				(-55:1.5cm) node[draw,fill,circle,inner sep=0pt,minimum size=4pt,label=right:{$(321)$}] (v5) {}
				(-5:1.5cm) node[draw,fill,circle,inner sep=0pt,minimum size=4pt,label=right:{$(231)$}] (v6) {};
				
				\path[-] 
				(v1) edge[blue,ultra thick] node[below] {$\alpha_2-\alpha_1$} (v2)
				(v3) edge[blue] (v6)
				(v4) edge[blue] node[above] {$\alpha_2-\alpha_1$} (v5)
				(v3) edge[green,ultra thick] node[left] {$\alpha_3-\alpha_1$} (v4)
				(v2) edge[green] (v5)
				(v1) edge[green] node[right] {$\alpha_3-\alpha_1$} (v6)
				(v5) edge[red,ultra thick] node[right] {$\alpha_3-\alpha_2$} (v6)
				(v1) edge[red] (v4)
				(v2) edge[red] node[left] {$\alpha_3-\alpha_2$} (v3);
				\end{tikzpicture}
				\caption{GKM Graph of $Fl(3)$}
			\end{subfigure}
			\begin{subfigure}[b]{0.45\textwidth}
				\centering
				\begin{tikzpicture}[scale=1.2]
				\path 
				(90:1.5cm) node[draw,fill,circle,inner sep=0pt,minimum size=4pt,label=above:{$3$}] (w3) {}
				(-150:1.5cm) node[draw,fill,circle,inner sep=0pt,minimum size=4pt,label=left:{$2$}] (w2) {}
				(-30:1.5cm) node[draw,fill,circle,inner sep=0pt,minimum size=4pt,label=right:{$1$}] (w1) {};
				
				\path[-] 
				(w3) edge[red] node[left] {$\alpha_3-\alpha_2$} (w2)
				(w3) edge[green] node[right] {$\alpha_3-\alpha_1$} (w1)
				(w1) edge[blue] node[above] {$\alpha_2-\alpha_1$} (w2);
				\end{tikzpicture}
				\caption{GKM Graph of $\C P^2$}
			\end{subfigure}
			\caption{GKM Graphs of $G/H=Fl(3)$ and $G/K_\pm = \C P^2$.}\label{fig:P2fl}
		\end{figure}
		\noindent The GKM bundles $K_\pm/H\rightarrow G/H\rightarrow G/K_\pm$ are both of the form $\C P^1 \rightarrow Fl(3) \overset{\pi}{\rightarrow} \C P^2$. The $1$-skeleton of $N_G^7$ is understood as follows. First, the manifold $N_G^7$ is formed by taking the cylinder $Fl(3)\times[-1,1]$ and collapsing the two ends $Fl(3)\times \{\pm 1\}$ to two $\C \P^2$'s via the map $\pi$ above. Second, let $Fl(3)_1$ be the $1$-skeleton of $Fl(3)$. The $1$-skeleton of ${N_G^7}$ is formed from the cylinder $Fl(3)_1\times [-1,1]$ by projecting the two ends $Fl(3)_1\times \{\pm 1\}$ via $\pi$. Third, take an $S^2$ in the $1$-skeleton of $Fl(3)$, then it lifts to an $S^2\times [-1,1]$ in the $1$-skeleton of $Fl(3)\times [-1,1]$. If this $S^2$ projects to an $S^2$ in the $1$-skeleton of $\C P^2$, then the two ends $S^2\times \{\pm 1\} \subset S^2\times [-1,1]$ survive in $M$ after projections. Hence the corresponding $S^2\times [-1,1]$ is glued with another $S^2\times [-1,1]$ at the two ends to form an $S^2\times S^1$ in $M$. If this $S^2$ projects to a fixed point of $\C P^2$, then the corresponding $S^2\times [-1,1]$ gets collapsed at the two ends $S^2\times \{\pm 1\}$ to two points and results in an $S^3$ in $M$. The $1$-skeleton graph of $N_G^7$ is drawn in the following Figure\,\ref{fig:NG7}. The $\circ$ labelled with $k$ is resulted from the pair of vertices $(ijk),(jik)$ of $\mathcal{G}_{Fl(3)}$. The inner three $\Box$'s represent $S^2\times S^1$, and the outer three $\Box$'s represent $S^3$. 
		\begin{figure}[H]{}
			\centering
			\begin{tikzpicture}
			\path 
			(90:1.5cm) node[draw,circle,label=right:{$3$}] (w3) {}
			(-150:1.5cm) node[draw,circle,label=left:{$2$}] (w2) {}
			(-30:1.5cm) node[draw,circle,label=right:{$1$}] (w1) {}
			(30:0.75cm) node[draw,rectangle,scale=1.2,green,label=right:{\color{green}$\alpha_3-\alpha_1$}] (v31){}
			(150:0.75cm) node[draw,rectangle,scale=1.2,red,label=left:{\color{red}$\alpha_3-\alpha_2$}] (v32){}
			(-90:0.75cm) node[draw,rectangle,scale=1.2,blue,label=below:{\color{blue}$\alpha_2-\alpha_1$}] (v21){}
			(90:2.25cm) node[draw,rectangle,scale=1.2,blue,label={\color{blue}$\alpha_2-\alpha_1$}] (v3){}
			(-150:2.25cm) node[draw,rectangle,scale=1.2,green,label=left:{\color{green}$\alpha_3-\alpha_1$}] (v2){}
			(-30:2.25cm) node[draw,rectangle,scale=1.2,red,label=right:{\color{red}$\alpha_3-\alpha_2$}] (v1){};
			
			\path[-] 
			(w3) edge[green] (v31)
			(w3) edge[red] (v32)
			(w3) edge[blue] (v3)
			(w1) edge[green] (v31)
			(w1) edge[blue] (v21)
			(w1) edge[red] (v1)
			(w2) edge[blue] (v21)
			(w2) edge[red] (v32)
			(w2) edge[green] (v2);
			\end{tikzpicture}
			\caption{$1$-skeleton Graph of $N_G^7$.}\label{fig:NG7}
		\end{figure}
		\noindent By Theorem \ref{thm:OddGKM}, every $T^3$-equivariant cohomology class of $N_G^7$ is a tuple $(f_1,g_1\theta;f_2,g_2\theta;f_3,g_3\theta)$ where $f_i,g_i \in \Q[\alpha_1,\alpha_2,\alpha_3]$ and satisfy the congruence relations:
		\begin{align*}
		\begin{cases}
		g_1 \equiv 0  \qquad\qquad\qquad &\mod \alpha_3-\alpha_2\\
		g_2 \equiv 0  \qquad\qquad\qquad &\mod \alpha_3-\alpha_1\\
		g_3 \equiv 0  \qquad\qquad\qquad &\mod \alpha_2-\alpha_1\\
		f_1 \equiv f_2  \qquad g_1  \equiv g_2 \quad &\mod \alpha_2-\alpha_1\\
		f_2 \equiv f_3  \qquad g_2  \equiv g_3 \quad &\mod \alpha_3-\alpha_2\\
		f_1 \equiv f_3  \qquad g_1  \equiv g_3 \quad &\mod \alpha_3-\alpha_1.
		\end{cases}	
		\end{align*}  
		The first three congruence relations mean that we can write $g_1=h_1(\alpha_3-\alpha_2),\,g_2=h_2(\alpha_3-\alpha_1),\,g_3=h_3(\alpha_2-\alpha_1)$ for $h_1,h_2,h_3 \in \Q[\alpha_1,\alpha_2,\alpha_3]$. Plugging the $h$-expressions of $g_1,g_2$ into the forth congruence relation, and noting that $\alpha_3-\alpha_2,\alpha_3-\alpha_1,\alpha_2-\alpha_1$ are pairwise coprime and $\alpha_3-\alpha_2\equiv \alpha_3-\alpha_1 \mod \alpha_2-\alpha_1$, we then get $h_1\equiv h_2 \mod \alpha_2-\alpha_1$. Likewise, we can plug $g_1,g_2,g_3$ into the fifth and the sixth congruence relations and will see that $h_1,h_2,h_3$ satisfy the same congruence relations as $f_1,f_2,f_3$, which is based on the GKM graph of $\C P^2$. The correspondence $(f_1,g_1\theta;f_2,g_2\theta;f_3,g_3\theta)\mapsto (f_1,f_2,f_3;h_1 r,h_2 r, h_3 r)$, where $r$ is of degree $3$, gives ring isomorphisms $H^*_{T^3}(N_G^7)\cong H^*_{T^3}(\C P^2)[r]/r^2 \cong H^*_{T^3}(\C P^2) \otimes H^*(S^3)$ matching Hoelscher's description (\cite{Ho10a} p.\,131) of the group structure of $H^*(N_G^7,\Z)$.
	\end{exm}
	
	\begin{rmk}
		One of the anonymous referees suggested viewing the above discussed cohomogeneity-one manifold $(G=U(3),K_-=K_+=U(2)\times U(1),H=U(1)^3)$ as $U(3)\times_{U(2)\times U(1)} S^3$. This very useful observation actually applies to any compact cohomogeneity-one manifold of the type $(G,K_-=K_+,H)$ as follows (without assuming $\mathrm{rank}\,G=\mathrm{rank}\,H$). Note that $K_\pm/H\cong S^{n_\pm}$. Since $K_-=K_+$, we denote $K_\pm$ by $K_0$ and $n_\pm$ by $n_0$. For the construction $M=G/K_- \cup_{\pi_-} (G/H \times (-1,1)) \cup_{\pi_+} G/K_+$, we have $G/H \times (-1,1)\cong G \times_{K_0} K_0/H \times (-1,1)\cong G \times_{K_0} S^{n_0} \times (-1,1)$, while gluing $G/H \times (-1,1)$ at its two ends to $G/K_- \cup G/K_+$ means we shall collapse $\big(G \times_{K_0} S^{n_0} \times \{-1\}\big) \cup \big(G \times_{K_0} S^{n_0} \times\{1\}\big)$ to $G/K_0 \cup G/K_0$. Therefore, we have 
		$$M\cong G \times_{K_0} S^{n_0+1}$$
		where $S^{n_0+1}$ is formed from $S^{n_0}\times [-1,1]$ by collapsing the two ends to two points, and the $K_0$-action on $S^{n_0+1}$ is induced from $S^{n_0}\times [-1,1]\cong K_0/H \times [-1,1]$. As a result, we immediately see $H^*_G(M)\cong H^*_G(G \times_{K_0} S^{n_0+1})\cong H^*_{K_0}(S^{n_0+1})$. However, the method of odd GKM theory does not assume $K_-=K_+$.
	\end{rmk}

	For the general case of a cohomogeneity-one $G$-manifold $M$ with $\mathrm{rank}\,G = \mathrm{rank}\,K_\pm = \mathrm{rank}\,H$, we can also carefully analyse the $1$-skeleta and apply the odd GKM theory to localize the equivariant cohomology rings. For the more general case of cohomogeneity-one $G$-manifolds regardless of the ranks and equivariant formality, in a joint work with Carlson, Goertsches and Mare \cite{CGHM19}, we described the equivariant cohomology rings using the representation theory of finite dihedral groups. The joint paper assumes certain orientability \cite[p.\,212 Rmk 4.5, p.\,216 Rmk 5.4]{CGHM19}, while the method of odd GKM theory does not.

\vskip 20pt
\bibliographystyle{amsalpha}

\end{document}